\newtheorem{thm}{Theorem}[section]
\newtheorem{cor}[thm]{Corollary}
\newtheorem{lem}[thm]{Lemma}
\newtheorem{prop}[thm]{Proposition}
\theoremstyle{definition}
\newtheorem{rem}[thm]{Remark}
\numberwithin{equation}{section}
\newcommand{\ZZ}{\mathbb Z}
\newcommand{\CC}{\mathbb C}
\newcommand{\PP}{\mathbb P}
\newcommand{\hra}{\hookrightarrow}
\newcommand{\ra}{\rightarrow}
\newcommand{\cA}{\mathcal{A}}
\newcommand{\tC}{\widetilde{C}}
\newcommand{\cM}{\mathcal{M}}
\newcommand{\cO}{\mathcal{O}}
\newcommand{\cR}{\mathcal{R}}
\newcommand{\cS}{\mathcal{S}}
\DeclareMathOperator{\Pic}{Pic}
 \DeclareMathOperator{\Nm}{{Nm}}
 \DeclareMathOperator{\divi}{div}
\begin{document}

\title[The trigonal construction]{The trigonal construction in the ramified case}
\author[H. Lange and A. Ortega]{ Herbert  Lange and  Angela Ortega}
\address{H. Lange \\ Department Mathematik der Universit\"at Erlangen \\ Germany}
\email{lange@mi.uni-erlangen.de}
              
\address{A. Ortega \\ Institut f\" ur Mathematik, Humboldt Universit\"at zu Berlin \\ Germany}
\email{ortega@math.hu-berlin.de}

\subjclass{14H40, 14H30}
\keywords{Prym variety, Prym map}%

\begin{abstract} 
To every double cover ramified in two points of a general trigonal curve of genus $g$, one can 
associate an  \'etale double cover of a tetragonal curve of genus $g+1$. We show that the corresponding Prym 
varieties are canonically isomorphic as principally polarized abelian varieties. 
\end{abstract}

\maketitle

\section{Introduction}

Let $\cR_{g}^{tr}$ denote the moduli space of  non-trivial \'etale double coverings of smooth
trigonal curves of genus $g$ and $\cM^{tet}_{g-1,0}$,  the open set of  the moduli space of 
tetragonal curves of genus $g-1$ consisting of tetragonal curves whose fibres of the 4:1 map have at least one \'etale
point. The classical trigonal construction due to Recillas \cite{r} gives a canonical 
isomorphism
$$
\cR_{g}^{tr} \ra \cM^{tet}_{g-1,0}
$$
such that for every covering $\widetilde C \ra C$ of $\cR_{g}^{tr}$ the Prym variety is 
isomorphic to the Jacobian of its image in $\cM^{tet}_{g-1,0}$ as principally polarized abelian 
varieties.\\

The aim of this paper is to show that a similar statement is valid in the case of double covers  over trigonal curves with 
two ramification points. If $f: \widetilde C \ra C$ is a double cover of smooth curves ramified exactly at two points, 
the Prym variety of the cover, which we denote by $P(f)$ or $P(\widetilde C/C)$, is a principally polarized 
abelian variety (ppav). Apart from the \'etale case,  it is the only way to obtain a ppav from a covering between curves\footnote{
With the exception of non-cyclic triple coverings over a genus 2 curve, whose Prym variety is also a ppav. }.
In the sequel, a {\it ramified double cover} will always  mean a double covering ramified at 
exactly two points.
We denote by $\cR b^{tr}_g$ the moduli space of ramified double covers 
$f: \widetilde C \ra C$ of smooth trigonal covers $h: C \ra \PP^1$ with $g$ the genus of $C$
and the additional property that 
the branch locus of $f$ is disjoint from the ramification locus of $h$.

We call an element $\widetilde C \stackrel{f}{\ra} C \stackrel{h}{\ra} \PP^1$ of $\cR b^{tr}_g$
{\it special} if the branch locus of $f$ is contained in a fibre of $h$ and {\it general} otherwise. 
Let $\cR b^{tr}_{g,sp}$ denote the closed subset of $\cR b^{tr}_g$ consisting of special coverings
and $\cR b^{tr}_{g,gen}$ its complement consisting of general coverings. Moreover, let $\cM_{g,1}^{tet}$ denote the subspace of the moduli space of smooth tetragonal curves of genus $g$
as defined in Section 3 and let  $\cR^{tet}_{g}$ the moduli space of \'etale double covers of smooth 
tetragonal curves of genus $g$.
Then our main theorem is (Theorem \ref{thm4.3} and \ref{thm5.1}),

\begin{thm} \label{mainthm}
\begin{enumerate} 
\item[(a)] There is a canonical isomorphism  
$$
 \cR b^{tr}_{g,sp} \ra  \cM_{g,1}^{tet}.
$$
If $\widetilde C \stackrel{f}{\ra} C \stackrel{h}{\ra} \PP^1$ is an element of  $\cR b^{tr}_{g,sp}$
and $X'$ the corresponding smooth tetragonal cover, we get an isomorphism of principally polarized 
abelian varieties  
$$
P(f) \stackrel{\simeq}{\ra} JX'.
$$
\item[(b)]
There is a canonical map
$$
\cR b^{tr}_{g,gen} \ra  \cR_{g+1}^{tet}.
$$
If $\widetilde C \stackrel{f}{\ra} C \stackrel{h}{\ra} \PP^1$ is an element of  $\cR b^{tr}_{g,gen}$
and $\pi: Y \ra X$ the corresponding \'etale double cover, then the principally polarized abelian varieties
$(P(f), \Xi_f)$ and $(P(\pi), \Xi_\pi)$ are canonically isomorphic.
\end{enumerate}
\end{thm}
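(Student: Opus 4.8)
The plan is to build both sides of the correspondence from one auxiliary curve over $\PP^1$ by a symmetric-product construction extending Recillas', and then to identify the two Prym varieties, together with their principal polarizations, inside a common Jacobian.

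Starting from $\widetilde C\stackrel{f}{\ra}C\stackrel{h}{\ra}\PP^1$ I would form the degree $6$ map $h\circ f$ and the curve
$$
W:=\set{D\in\widetilde C^{(3)}\;:\;f_*D=h^*t\ \text{for some}\ t\in\PP^1},
$$
an $8:1$ cover of $\PP^1$: over a general $t$ with $h^{-1}(t)=\set{c_1,c_2,c_3}$ its fibre is the set of $2^3$ divisors obtained by choosing one of the two $f$-preimages of each $c_i$. The hypothesis that the branch locus of $f$ avoids the ramification of $h$ makes $W$ smooth and keeps the local monodromy of $W\ra\PP^1$ over the branch points of $h$ of the same type as in the classical \'etale construction, while each of the two branch points $q_1,q_2$ of $f$ contributes to the monodromy a transposition within the pair lying over it. Let $\iota$ be the involution of $W$ induced by the deck involution $\sigma$ of $f$, i.e.\ $\iota$ flips all three signs of a divisor $c_1^{\pm}+c_2^{\pm}+c_3^{\pm}$; since no fibre of $h$ consists only of branch points of $f$, $\iota$ acts freely. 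If $h(q_1)\neq h(q_2)$ (the general case) the transposition at $q_1$ is an odd permutation of the $2^3$ sections, so the monodromy is transitive, $W$ is connected, Riemann--Hurwitz gives $g_W=2g+1$, and $Y:=W$, $X:=W/\iota$ yield the \'etale double cover $\pi:Y\ra X$ of a tetragonal curve of genus $g+1$. If $h(q_1)=h(q_2)$ (the special case) the monodromy lies in the index-two parity-preserving subgroup $\cong S_4$, so $W=X'\sqcup\iota(X')$ with $X'$ connected, $g_{X'}=g$, and the fibre of $X'\ra\PP^1$ over the distinguished value has two simple ramification points, placing $X'$ in the space $\cM_{g,1}^{tet}$ of Section 3. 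This defines the canonical maps of (a) and (b).

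For the comparison of polarized abelian varieties I would use the incidence curve
$$
\Gamma:=\set{(\widetilde c,D)\in\widetilde C\times W\;:\;\widetilde c\in D},
$$
which is $3:1$ over $W$ and $4:1$ over $\widetilde C$ (and restricts, in the special case, to a $(2,3)$-correspondence between $\widetilde C$ and $X'$). The first, easy step is that $\Gamma$ respects the Prym structures: viewing a point $D\in W$ as a divisor on $\widetilde C$, the identity $f_*D=h^*t$ together with the linear equivalence of the fibres of $h$ shows that $\Nm_f$ annihilates the image of the homomorphism $\gamma:JW\ra J\widetilde C$ induced by $\Gamma$, so $\gamma$ lands in $P(f)$, and dually, using $\iota D=\sigma^{(3)}D$, the transpose homomorphism $\delta:J\widetilde C\ra JW$ carries $P(f)$ into the $\iota$-anti-invariant part $P(\pi)$ (in case (a), into the copy of $JX'$ in $JW$); one then checks that $\gamma$ and $\delta$ restrict to isogenies between $P(f)$ and $P(\pi)$ (resp.\ between $P(f)$ and $JX'$). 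The real content is that these isogenies can be normalized to an \emph{isomorphism of principally polarized abelian varieties}. Equivalently, passing to the Galois closure $Z$ of $h\circ f$, whose group is $(\ZZ/2)\wr S_3\cong\ZZ/2\times S_4$ in the general case and its ``even'' subgroup $\cong S_4$ in the special case, and decomposing $JZ$ into isotypical pieces, one must show that $P(f)$ and $P(\pi)$ (resp.\ $P(f)$ and $JX'$) are the isotypical piece of one and the same irreducible representation, and that the principal polarizations $\Xi_f$ and $\Xi_\pi$ --- with $\Xi_f$ Mumford's principal polarization on the Prym of the two-branch-point cover $f$ --- are both induced from the polarization of $JZ$ by the same recipe, hence agree.

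The main obstacle is precisely this last, polarization-theoretic step. It requires pinning down the class $\gamma^*\Theta_{\widetilde C}$ on $JW$ and its restriction to $P(\pi)$, controlling the finite $2$-group kernels of $\gamma$ and $\delta$ and the correction terms in $\Gamma\circ\Gamma^{t}$ contributed by the non-reduced fibres of $W$ over the branch points of $h$ and over $h(q_1),h(q_2)$, and checking, against the Mumford-type description of $\Xi_f$, that the resulting polarization is principal rather than a nontrivial multiple --- i.e.\ that $\Gamma$ has ``exponent one'' for both Prym varieties. Organizing this through the representation theory of $(\ZZ/2)\wr S_3$ (resp.\ $S_4$), by comparing the multiplicities $\dim\rho^{H}$ of each irreducible $\rho$ under the relevant point-stabilizers $H$, is the cleanest route. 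Finally, one should note that the two cases diverge from the start --- $W$ is disconnected in the special case (target a Jacobian) and connected in the general case (target a Prym) --- which is why the statement is split into Theorems \ref{thm4.3} and \ref{thm5.1} and the two halves are proved by parallel but separate arguments.
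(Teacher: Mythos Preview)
Your construction of $W$ and the monodromy analysis match the paper's Section 2, but there is a concrete error in the special case: $W$ is \emph{not} smooth when $h(q_1)=h(q_2)$. Proposition \ref{p5.1} shows that $W$ then acquires exactly two nodes, and the analysis in Section 4 gives $W=Y_1\cup Y_2$ with $Y_1\cap Y_2=\{D_1,D_2\}$, so the components meet rather than being disjoint. Your parity argument only sees the restriction of $W\ra\PP^1$ to the complement of the special fibre, i.e.\ the normalization. The paper handles part (a) by a different mechanism altogether: it identifies $q_1\sim q_2$ and $p_1\sim p_2$ to produce an \emph{admissible} nodal double cover $\widetilde C'\ra C'$ lying in Donagi's partial compactification $\overline\cR_{g+1}^{tr}$, invokes Donagi's extended trigonal construction (Corollary \ref{c3.3}) to get $P(f')\simeq JX'$, and then identifies $P(f')\simeq P(f)$ via Beauville's exact diagram (Lemma \ref{l4.2}). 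So (a) is a reduction to a known theorem, not a direct correspondence argument.

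For part (b) your route via the incidence curve $\Gamma\subset\widetilde C\times W$ and the representation theory of $(\ZZ/2)\wr S_3$ is genuinely different from the paper's. The paper takes $\varphi:Y\ra P(f)$ to be the restriction of the Abel--Jacobi map $\widetilde C^{(3)}\ra J\widetilde C$ and proves directly the cohomological identity
\[
\varphi_*[Y]=\frac{2}{(g-1)!}\wedge^{g-1}[\Xi_f]\quad\text{in }H^{2g-2}(P(f),\ZZ),
\]
using Macdonald's formula for $[g_3^1]$ in $H^4(C^{(3)},\ZZ)$, pullback under $f^{(3)}$, Poincar\'e's formula, and the decomposition $2[\widetilde\Theta]=\Nm_f^*[\Theta]+\widehat\iota_{P(f)}^*[\Xi_f]$. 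Welters' criterion then yields the isomorphism of ppav's immediately, without tracking kernels of isogenies or correction terms in $\Gamma\circ\Gamma^{t}$. Your approach would explain structurally why both Pryms occupy the same isotypical piece, but as you yourself flag, the exponent-one verification for the polarization is the entire content, and you leave it open; the paper's cohomological computation is exactly what settles that point, and does so without ever passing to the Galois closure.
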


Furthermore, in the case (b) of the theorem the image of the map is contained
in the subspace  $\cR_{g+1,2}^{tet}$, the locus of \'etale double coverings over tetragonal curves $X$, such that the 4:1 map $k: X \ra \PP^1$ 
has exactly two fibres consisting of two simple ramification points (Proposition \ref{p5.2}). Note that $\cR b^{tr}_{g,gen}$ and 
$\cR_{g+1,2}^{tet}$ are of the same dimension (see Remark 
\ref{r5.3}). We do not know the exact image of the map between these moduli spaces nor whether it is generically injective. \\

In Section 2 we define for every $\widetilde C \ra C \ra \PP^1$ the corresponding covering 
$Y \ra X$ with $X$ tetragonal and work out its geometric properties. In Section 3 we recall
a special case of Donagi's extension of the trigonal construction which is used in Section 4
for the proof of part (a) of the theorem. Finally in Section 5 we give the proof of part (b).
 \bigskip

{\it Aknowledgements}. We would like to thank  Andrey Soldatenkov who helped us with the translation of \cite{da}.

\section{Double covers of trigonal covers}

Let $C$ be a smooth trigonal curve of genus $g \ge 3$ with trigonal cover $h: C \ra \PP^1$. 
According to Hurwitz formula  the ramification divisor $R_h$ of $h$ is of degree $2g+4$. Let
$$
f: \widetilde C \ra C
$$
be a double cover branched over 2 points $p_1,p_2$ of $C$. We assume that $p_1$ and $p_2$ 
are disjoint from $R_h$. Let $C^{(3)}$ and $\widetilde C^{(3)}$ the third symmetric products of 
$C$ and $\widetilde C$ respectively. Let $\PP^1 \simeq g_3^1 \hra C^{(3)}$ be the natural embedding
of the trigonal linear system. We define the variety $Y$ by the following left hand cartesian diagram
\begin{equation} \label{d2.1}
\xymatrix{
 Y  \ar@{^{(}->}[r] \ar[d]_{8:1}^{\widetilde k} & \widetilde C^{(3)}  \ar[d]_{8:1}^{f^{(3)}} \\
g^1_3 \ar@{^{(}->}[r] & C^{(3)} 
}
\end{equation}
 The fibre of $\widetilde k$ over a point $a \in \PP^1$ consists of the 8 sections $s$
of $f$ over $a$:
$$
s: h^{-1}(a) \ra f^{-1}h^{-1}(a) \quad \mbox{with} \quad f \circ s = id.
$$
(In \cite{d} Donagi denotes $Y$ by $f_*\widetilde C$, since considering $\widetilde C$ as a local system on $C$, this is just the push forward local system 
on $\PP^1$). Let denote $\iota$ de involution on $\tC$ associated to $f$. There are 2 structures on $Y, $
an involution denoted also by $\iota$: 
$$ 
\iota: Y \ra Y,  \quad q_1 + q_1 + q_3 \mapsto \iota(q_1)+\iota(q_2)+ \iota(q_3),
$$
and an equivalence relation: two sections 
$$
s_1,s_2: h^{-1}(a) \ra f^{-1}h^{-1}(a)
$$
are called equivalent if they differ by an even number of changes $q \mapsto \iota(q)$. This defines a branched double cover 
$$
O:= O(h \circ f) \ra \PP^1,
$$
called the orientation cover of $h \circ f$. For $n=3$ we have the cartesian diagram (see \cite[Section 2.1
and Lemma 2.1]{d}):
\begin{equation} \label{d2.2}
\xymatrix{
&Y \ar[dl]^{2:1}_\pi \ar[dr]^{\psi}_{4:1}\\
X \ar[dr]^{4:1}_k && O \ar[dl]_{2:1}^\omega\\
& \PP^1 &
}
\end{equation}
with $X := Y/\iota$.

The following lemma is easy to check.

\begin{lem} \label{fixedfree}
The involution $\iota:Y \ra Y$ is fixed-point free.
\end{lem}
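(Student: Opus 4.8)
The plan is to describe a point $y \in Y$ explicitly and show $\iota(y) \neq y$. Recall that a point of $Y$ lying over $a \in \PP^1$ is a section $s$ of $f$ over the fibre $h^{-1}(a) = \{x_1, x_2, x_3\}$ (counted with multiplicity), i.e.\ a choice of one of the two points of $\widetilde C$ in each fibre $f^{-1}(x_i)$; equivalently an effective divisor $q_1 + q_2 + q_3$ on $\widetilde C$ with $f_*(q_1+q_2+q_3) = x_1+x_2+x_3 \in g^1_3$. The involution $\iota$ sends this to $\iota(q_1) + \iota(q_2) + \iota(q_3)$.

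First I would treat the case where $a$ avoids the branch locus $\{p_1, p_2\}$ of $f$ and the point $a$ is such that $h^{-1}(a)$ is reduced and disjoint from the ramification divisor $R_h$. Then the three points $q_1, q_2, q_3$ lie in three distinct fibres $f^{-1}(x_1), f^{-1}(x_2), f^{-1}(x_3)$, each fibre consisting of two distinct points; since $\iota$ swaps the two points in each such fibre, $\iota(q_i) \neq q_i$ for every $i$, and the supports of $q_1+q_2+q_3$ and $\iota(q_1)+\iota(q_2)+\iota(q_3)$ are disjoint, so certainly $\iota(y) \neq y$. The only way a fixed point of $\iota$ could occur is if some $q_i$ is itself a ramification point of $f$, i.e.\ $q_i \in f^{-1}(p_1) \cup f^{-1}(p_2)$ and $\iota(q_i) = q_i$. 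So the content of the lemma is to rule this out, i.e.\ to show that no section $s$ appearing in $Y$ passes through a ramification point of $f$.

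The key point is the hypothesis, built into the definition of $\cR b^{tr}_g$, that the branch locus $\{p_1, p_2\}$ of $f$ is \emph{disjoint} from the ramification locus $R_h$ of $h$. Thus if $p_1 \in h^{-1}(a)$ for some $a$, then $p_1$ appears with multiplicity one in the fibre $h^{-1}(a) = p_1 + x_2 + x_3$ with $x_2, x_3 \neq p_1$ (and if also $p_2 \in h^{-1}(a)$ then $h^{-1}(a) = p_1 + p_2 + x_3$ with all three distinct, since $p_1 \neq p_2$). A point $y \in Y$ over such an $a$ is a divisor $q_1 + q_2 + q_3$ with $f(q_1) = p_1$, so $q_1 = f^{-1}(p_1)$ is forced and $\iota(q_1) = q_1$; however $f(q_2) = x_2$, $f(q_3) = x_3$ with $x_2, x_3$ unramified points of $f$, hence $\iota(q_2) \neq q_2$ and $\iota(q_3) \neq q_3$. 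Therefore $\iota(q_1+q_2+q_3) = q_1 + \iota(q_2) + \iota(q_3) \neq q_1 + q_2 + q_3$, so $y$ is again not fixed. Combining the two cases, $\iota$ is fixed-point free on all of $Y$.

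I expect the main (and only) subtlety to be bookkeeping at the non-generic fibres of $h$: one must make sure that when $h^{-1}(a)$ has a repeated point $x_i$ away from $p_1, p_2$ — e.g.\ a ramification point of $h$, which by hypothesis is not a branch point of $f$ — the corresponding section still picks the two points of $f^{-1}(x_i)$ (with multiplicity) rather than producing a fixed point of $\iota$; since $\iota$ still acts freely on $f^{-1}(x_i)$ the divisor $q_1+q_2+q_3$ is moved by $\iota$ as long as at least one $q_i$ lies over an unramified point of $f$, and at most one of $x_1,x_2,x_3$ can equal $p_1$ or $p_2$ in a reduced-or-not fibre because $p_1\neq p_2$ and $p_1,p_2\notin R_h$. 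Hence there is always an unmoved-free coordinate, and the lemma follows.
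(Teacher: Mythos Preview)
Your approach is the natural one, and since the paper gives no proof beyond ``easy to check'' you are supplying the intended argument. There is, however, a real gap in the step where the fibre of $h$ contains a branch point of $f$. From $\iota(q_2)\neq q_2$ and $\iota(q_3)\neq q_3$ you infer $q_1+\iota(q_2)+\iota(q_3)\neq q_1+q_2+q_3$, but this fails when $\iota$ \emph{swaps} $q_2$ and $q_3$, i.e.\ when $x_2=x_3$ and $\{q_2,q_3\}=f^{-1}(x_2)$. Concretely, if the fibre of $h$ through $p_1$ itself meets $R_h$, say $h^{-1}(h(p_1))=p_1+2x$ with $x\in R_h$, then writing $f^{-1}(x)=\{r,\iota(r)\}$ the point $\widetilde p_1+r+\iota(r)\in Y$ is genuinely fixed by $\iota$. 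Your last paragraph does not exclude this: having one $q_i$ with $\iota(q_i)\neq q_i$ is not enough to move the \emph{divisor}.

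The clean formulation is the parity remark that an $\iota$-invariant effective divisor of odd degree on $\widetilde C$ must meet the ramification locus of $f$; writing such a $\widetilde D$ as $f^*E+a_1\widetilde p_1+a_2\widetilde p_2$ with $E$ effective on $C\setminus\{p_1,p_2\}$ and $2\deg E+a_1+a_2=3$, the case $\deg E=0$ is ruled out since $p_1,p_2\notin R_h$, while $\deg E=1$ forces $f_*\widetilde D=2x+p_j$, which lies in $g^1_3$ exactly when the fibre through $p_j$ is non-reduced. Thus the lemma needs the extra hypothesis that $h^{-1}(h(p_i))$ contains no point of $R_h$. The paper tacitly assumes this throughout (see the case split in the proof of Proposition~\ref{genus}, where $h(p_\nu)$ and the branch points of $h$ are treated as disjoint), so you should make the assumption explicit.
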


Identifying the $\PP^1$ of diagram \eqref{d2.2} with the image of $h$, we have 
according to \cite[Lemma 2.3]{d}:

\begin{lem} \label{l2.2}
The cover $O \ra \PP^1$ is branched exactly at $h(p_1)$ and $h(p_2)$. 
\end{lem}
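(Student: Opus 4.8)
The plan is to determine the branch locus of $\omega:O\to\PP^1$ by a purely local monodromy computation over a small disc around each candidate branch point. First I would observe that over $\PP^1\setminus B$, where $B\subset\PP^1$ denotes the branch locus of $h\circ f:\widetilde C\to\PP^1$, the map $h\circ f$ is an honest $6$-sheeted covering, so $\widetilde k:Y\to\PP^1$ is an honest $8$-sheeted covering (the eight sections), and $O\to\PP^1$, being the quotient of $Y$ by a locally constant equivalence relation, is an honest double covering there; hence $\omega$ can only be branched over points of $B$. Next I would identify $B$ itself: a point $b\in\PP^1$ lies in $B$ iff the fibre $(h\circ f)^{-1}(b)=f^{-1}(h^{-1}(b))$ has fewer than six points, which happens exactly when $h^{-1}(b)$ has fewer than three points (that is, $b\in h(R_h)$) or one of the three points of $h^{-1}(b)$ has a single $f$-preimage (that is, $b\in\{h(p_1),h(p_2)\}$, since the two branch points $p_1,p_2$ of $f$ are disjoint from $R_h$). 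Thus $B=h(R_h)\cup\{h(p_1),h(p_2)\}$, and it remains to decide over which points of $B$ the cover $O$ is genuinely branched.

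For $b\in B$, I would pick a small loop $\gamma$ in $\PP^1\setminus B$ around $b$ and a nearby fibre with $h^{-1}(a)=\{q_1,q_2,q_3\}$ and $f^{-1}(q_i)=\{x_i^+,x_i^-\}$. Since $h\circ f$ factors through $f$, the monodromy of $h\circ f$ along $\gamma$ respects the partition of the six points into the three pairs $f^{-1}(q_i)$, hence is an element $(\delta_1,\delta_2,\delta_3;\sigma)$ of the wreath product $S_2\wr S_3\subset S_6$, with $\sigma\in S_3$ the monodromy of $h$ and $\delta_i\in\{\pm1\}$ recording whether transport around $\gamma$ interchanges $x_i^+$ and $x_i^-$. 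A section $(\eps_1,\eps_2,\eps_3)\in\{\pm1\}^3$ is then carried to the section $(\eps'_j)$ with $\eps'_{\sigma(i)}=\delta_i\eps_i$, so that $\prod_j\eps'_j=\bigl(\prod_i\delta_i\bigr)\bigl(\prod_i\eps_i\bigr)$; since a single $\iota$-change at $q_i$ flips $\eps_i$, the two equivalence classes of sections are precisely the two level sets of $(\eps_i)\mapsto\prod_i\eps_i$, and the monodromy interchanges them exactly when $\prod_i\delta_i=-1$, i.e.\ exactly when $(\delta;\sigma)$ is an odd permutation of the six-element fibre. (Equivalently, $O$ is the classical orientation, or sign, double cover of $h\circ f$.) Hence $O$ is branched over $b$ iff the local monodromy of $h\circ f$ at $b$ is odd in $S_6$. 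The step I expect to be the main obstacle is precisely this identification: checking that the parity of $\iota$-changes relating two sections is governed by $\prod_i\delta_i$, and tracking the $\pm$-labels correctly; once that is in place the rest is routine bookkeeping.

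Finally I would evaluate this parity over the points of $B$. Over $b\in h(R_h)$, since $p_1,p_2\notin R_h$ the cover $f$ is \'etale above $h^{-1}(b)$, and the connected components of $h^{-1}(\Delta)$ for a small disc $\Delta$ around $b$ are simply connected, so $f$ restricts to a trivial double cover over each; hence $\delta=(1,1,1)$ and the monodromy of $h\circ f$ is $(1,1,1;\sigma)$, which as an element of $S_6$ is a product of two permutations of the same cycle type on disjoint sets (the preimages along the two sheets of $f$) and is therefore even, so $O$ is unramified over $b$. Over $b=h(p_i)$, the trigonal map $h$ is unramified above $h^{-1}(b)$ while $f$ is branched precisely at $p_i$ among the three points of this fibre, so the local monodromy of $h\circ f$ is the single transposition of the two $f$-preimages of the point of $h^{-1}(b)$ near $p_i$, which is odd; thus $O$ is branched over $h(p_1)$ and over $h(p_2)$. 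Combining, the branch locus of $\omega$ equals $\{h(p_1),h(p_2)\}$, as claimed. (This uses that these two points are distinct, i.e.\ that the cover is general; in the special case the same computation gives $\delta=(-1,-1,1)$ over the common value $h(p_1)=h(p_2)$, so $O\to\PP^1$ is then \'etale.)
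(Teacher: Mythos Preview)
The paper does not give its own proof of this lemma; it simply invokes \cite[Lemma 2.3]{d}. Your direct local-monodromy argument is correct and is essentially a self-contained proof of that cited result: you identify $O$ with the sign (orientation) double cover of the degree-$6$ map $h\circ f$, so that $\omega$ is branched over $b$ precisely when the local monodromy of $h\circ f$ at $b$ is odd in $S_6$, and then you check the parity case by case. The key identification $\mathrm{sgn}(\delta;\sigma)=\prod_i\delta_i$ for elements of $S_2\wr S_3\subset S_6$ is right (it holds on generators and is multiplicative), and your analysis over $h(R_h)$ and over $h(p_i)$ is accurate.

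One caveat about your final parenthetical. Your computation $\delta=(-1,-1,1)$ in the special case correctly shows that the monodromy on the two sheets of $O$ around $h(p_1)=h(p_2)$ is trivial, so the \emph{normalization} of $O$ is \'etale over $\PP^1$. But the paper explicitly adopts (right after the lemma) the convention that a ``branch point'' of $\omega$ is a point over which the fibre of $O$ consists of a single point, possibly a singular point of $O$. In the special case, as the paper works out later, $O$ is two copies of $\PP^1$ meeting at a node lying over $h(p_1)=h(p_2)$, so this point is still a branch point in the intended sense, and the lemma is meant to hold uniformly. Your monodromy argument only sees the normalization; to get the statement in the special case you would also need to observe that the two local branches of $O$ actually meet over $h(p_1)=h(p_2)$, which follows because the fibre of $Y$ there collapses to two points interchanged by $\iota$ and lying in opposite parity classes.
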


Here a branch point 
is a point over which the fibre of $O \ra \PP^1$ consists of 1 point, so the fibre might be a singular point
of $O$.  Note that $O \ra \PP^1$ is of degree  2, so Lemma \ref{l2.2} implies that $O$ is connected. 

\begin{prop} \label{conn}
Suppose the branch locus of $f$ and the ramification locus of $h$ are disjoint.
Then the curve $Y$ is connected.
\end{prop}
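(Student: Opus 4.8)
The plan is to prove connectedness by analysing the monodromy of the degree-$8$ covering $\widetilde k\colon Y\to\PP^1$ of diagram \eqref{d2.1}.

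First I would describe the generic fibre. Over a general $a\in\PP^1$ the fibre $h^{-1}(a)=\{x_1,x_2,x_3\}$ consists of three distinct points, none of them equal to $p_1$ or $p_2$, and $\widetilde k^{-1}(a)$ is the set of sections, namely $f^{-1}(x_1)\times f^{-1}(x_2)\times f^{-1}(x_3)$; choosing one point of each $f^{-1}(x_i)$ identifies it with $(\ZZ/2)^3$, so the monodromy group sits inside $(\ZZ/2)^3\rtimes S_3$. The branch locus of $\widetilde k$ lies in $h(R_h)\cup\{h(p_1),h(p_2)\}$, and I would read off the local monodromy at each type of point by a local computation inside $\widetilde C^{(3)}$, using that $f^{(3)}$ ramifies precisely along the divisor of those $3$-cycles that either contain a ramification point of $f$ or contain a pair $q+\iota q$. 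Over a point of $h(R_h)$ two of the $x_i$ collide, and since $p_1,p_2$ avoid $R_h$ no sheet of $f$ is involved, so the monodromy is the transposition of the two corresponding $\ZZ/2$-factors; these transpositions generate the monodromy group of $h$ in $S_3$, which is transitive because $C$ is connected. Over $h(p_i)$ the point of $h^{-1}$ approaching $p_i$ has its two $f$-preimages interchanged, so the monodromy is the flip of the corresponding $\ZZ/2$-factor.

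In the general case $h(p_1)\ne h(p_2)$ this finishes the argument: the monodromy group of $\widetilde k$ contains a transitive subgroup of $S_3$ permuting the coordinates together with one coordinate flip, hence all three flips, hence the whole group $(\ZZ/2)^3$ — which already acts transitively on $(\ZZ/2)^3$. So $\widetilde k$ has transitive monodromy, $Y$ is connected, and one sees en route that $Y$ is smooth. (Equivalently, via \eqref{d2.2} and Lemma \ref{l2.2}: there is a loop over which the monodromy of $\widetilde k$ equals the involution $\iota$, the product of the three flips; as $\iota$ fixes a point of the $4$-element fibre of $k$ but is orientation-reversing, the double cover $\pi\colon Y\to X$ is nontrivial, and since $X\to\PP^1$ is connected by the same monodromy count, so is $Y$.)

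The hard part is the special case $h(p_1)=h(p_2)=:a_0$, where $Y$ is no longer smooth. A local computation shows that $g_3^1$ passes through the transverse crossing of the two branches ``contains $p_1$'' and ``contains $p_2$'' of the branch divisor of $f^{(3)}$, so $Y$ acquires two ordinary nodes over $a_0$; the monodromy around $a_0$ is now the product of the flips at the two coordinates approaching $p_1$ and $p_2$, an even element, and the monodromy group has exactly two orbits on $(\ZZ/2)^3$, the even- and odd-weight vectors, so the smooth locus $\widetilde k^{-1}(\PP^1\setminus B)$ has two components. The point is that at each of the two nodes of $Y$ over $a_0$ the two local branches belong to the two different components, so the nodes reconnect them; combined with a monodromy element of $h$ that moves the third coordinate, this gives that $Y$ itself is connected. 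Making this last step precise — tracking which sheets meet at which node — is the only genuine obstacle; everything else is a routine local computation together with the observation that connectedness of $C$ forces transitivity of the $S_3$-monodromy of $h$.
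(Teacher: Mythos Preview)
Your approach is correct and is essentially the same as the paper's: both argue via the monodromy of $\widetilde k$ inside $(\ZZ/2)^3\rtimes S_3$. The paper's version is much more compressed --- it simply quotes Welters for the fact that in the \'etale case the monodromy consists of even permutations and $Y$ has two components, then observes that the branch points of $f$ contribute odd monodromy elements, and finally appeals to the connectedness of $O$ (Lemma~\ref{l2.2}) in place of your direct transitivity check. Your explicit identification of the local monodromy (transpositions over $h(R_h)$, single flips over $h(p_i)$) is exactly what underlies the paper's sentence.

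Your treatment of the special case is in fact more careful than the paper's proof of this proposition: you correctly note that when $h(p_1)=h(p_2)$ the loop around $a_0$ contributes only a product of two flips, so the monodromy group stays even and the smooth locus of $Y$ genuinely has two components that must be glued through the nodes. The paper does not spell this out here; it only works out the component structure later in Section~4 (the lemma showing $Y=Y_1\cup Y_2$ with $Y_1\cap Y_2=\{D_1,D_2\}$), which is precisely the ``tracking which sheets meet at which node'' step you flag as the remaining obstacle. One small quibble: in your parenthetical alternative, the phrase ``$\iota$ fixes a point of the $4$-element fibre of $k$'' is garbled --- what you mean is that $\iota$ acts trivially on the $X$-fibre (so it lies in the kernel of the monodromy of $k$) while acting nontrivially on $Y$, hence $\pi$ is a nontrivial cover of a connected $X$; stated that way the argument is fine.
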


\begin{proof}
It was proven in \cite{w} that $Y$ consists of two connected components when $f$ is \'etale. In that case  all the elements of the 
monodromy group of $\widetilde k$ are given by even permutations. The existence of ramification 
points for $f$ adds also odd permutations to the monodromy group, and taking into account that $O$ is connected, this
implies that the curve $Y$ is connected.
\end{proof}

\begin{prop} \label{p5.1}
Suppose the branch locus of $f$ and the ramification locus of $h$ are disjoint. 
\begin{enumerate}
\item If $h \circ f$ is general, then the curve $Y$ is smooth;
\item If $h \circ f$ is special, the the curve $Y$ is smooth apart from $2$ nodes.
\end{enumerate} 
\end{prop}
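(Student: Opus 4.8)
The plan is to argue locally over $\PP^1$ through the finite map $\widetilde k \colon Y\to\PP^1$. Fix $a_0\in\PP^1$; the points of $\widetilde k^{-1}(a_0)$ are exactly the effective degree-$3$ divisors $D$ on $\widetilde C$ with $f^{(3)}(D)=h^{-1}(a_0)$, and I would describe $Y$ near each of them by means of the cartesian square \eqref{d2.1}, organising the analysis according to the position of $a_0$ relative to the branch values of $h$ and the points $h(p_1),h(p_2)$. Since $C$ is general I may assume $h$ has only simple ramification and that $h(p_1),h(p_2)$ are not branch values of $h$; together with the hypothesis that the branch locus of $f$ is disjoint from the ramification locus of $h$, this guarantees that over $h(p_1)$ and over $h(p_2)$ the fibre of $h$ is reduced and only $f$, not $h$, is branched. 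Recall also that $h\circ f$ is general precisely when $h(p_1)\neq h(p_2)$ and special precisely when $h(p_1)=h(p_2)$, and that in the latter case $h^{-1}(a_0)=p_1+p_2+c_3$ with three distinct points.

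When $a_0\notin\{h(p_1),h(p_2)\}$ the cover $f$ is \'etale over a neighbourhood of $h^{-1}(a_0)$, so near $\widetilde k^{-1}(a_0)$ the curve $Y$ coincides with the one furnished by the classical (\'etale) trigonal construction, where smoothness is already known. Concretely: if $a_0$ is not a branch value of $h$, then $h^{-1}(a_0)$ and all the divisors $D$ above it are reduced, $f^{(3)}$ is \'etale near each such $D$, and $g^1_3$ is a smooth curve contained in the smooth locus of $C^{(3)}$, so $Y$ is smooth near $\widetilde k^{-1}(a_0)$; if $h^{-1}(a_0)=2c_1+c_2$ is a simple branch fibre of $h$, then a direct computation in the standard charts of $C^{(3)}$ and $\widetilde C^{(3)}$ around the relevant non-reduced divisors, writing a nearby degree-$2$ divisor on $C$ as $v^2+e_1v+e_2=0$, shows that $\widetilde k^{-1}(a_0)$ consists of six smooth points of $Y$ with ramification profile $(2,2,1,1,1,1)$.

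The essential case is $a_0=h(p_1)$, and, when $h\circ f$ is special, $a_0=h(p_2)$ as well. Let $\widetilde p_i$ be the ramification point of $f$ over $p_i$; choose a local coordinate $s$ on $\PP^1$ at $a_0$, which since $h$ is unramified at $p_i$ is also a local coordinate of $C$ at $p_i$, and a local coordinate $t_i$ of $\widetilde C$ at $\widetilde p_i$ with $t_i^{2}=s$; the remaining points of $h^{-1}(a_0)$ are unramified for both $h$ and $f$. In the general case a point of $\widetilde k^{-1}(a_0)$ is $D=\widetilde p_1+q_2+q_3$ with $q_j$ one of the two preimages of $c_j$; in the chart $(a_1,a_2,a_3)$ of $\widetilde C^{(3)}$ at $D$, with $a_1$ the coordinate near $\widetilde p_1$, the map $f^{(3)}$ reads $(a_1,a_2,a_3)\mapsto(a_1^{2},a_2,a_3)$ and $g^1_3$ is the small diagonal, so $Y$ is locally the smooth curve $\{a_1^{2}=a_2=a_3\}$; hence $\widetilde k^{-1}(a_0)$ consists of four smooth points, each of ramification index $2$, and $Y$ is smooth there. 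In the special case a point of $\widetilde k^{-1}(a_0)$ is $D=\widetilde p_1+\widetilde p_2+q_3$ with $q_3$ a preimage of $c_3$; in the chart $(t_1,t_2,x)$ of $\widetilde C^{(3)}$ at $D$ the map $f^{(3)}$ reads $(t_1,t_2,x)\mapsto(t_1^{2},t_2^{2},x)$ and $g^1_3$ is again the small diagonal, so after eliminating $x$ the local equation of $Y$ is $t_1^{2}=t_2^{2}$, that is $(t_1-t_2)(t_1+t_2)=0$, an ordinary node. Since there are exactly two such divisors $D$, one for each preimage of $c_3$, the curve $Y$ has precisely these two nodes and is smooth everywhere else.

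The main obstacle is not a single computation but the bookkeeping: checking that the case division is exhaustive and, above all, that the genericity of $C$ genuinely separates the branching of $f$ from that of $h$. Indeed, if $h(p_i)$ were a branch value of $h$, the collision would, by a variant of the chart computation above, produce an extra node, so that configuration must be excluded; and in the special case one must confirm that the two nodes exhibited are the only singular points of $Y$. A convenient consistency check is the Riemann--Hurwitz formula for $\widetilde k$: with the ramification data found above, $Y$ has arithmetic genus $2g+1$ in both cases, the normalisation in the special case having genus $2g-1$ and carrying the two nodes, which matches $2g(X)-1$ for $g(X)=g+1$, as predicted by Theorem \ref{mainthm}(b).
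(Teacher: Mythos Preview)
Your approach is correct and genuinely different from the paper's. The paper follows Welters and analyses the Zariski tangent space of $Y$ at a point $\widetilde D$ via the cartesian square of tangent spaces induced by \eqref{d2.1}: smoothness amounts to the equality $T_{C^{(3)}}(D)=dj\,T_{g_3^1}(D)+df^{(3)}T_{\widetilde C^{(3)}}(\widetilde D)$, which after translation via deformation theory becomes a surjectivity statement for a composed map $H^0(\cO_C(D))/H^0(\cO_C)\to\mathrm{coker}\,\beta$. The failure of surjectivity in the special case is then traced to the base-point-freeness of the $g^1_3$. By contrast, you work in explicit local charts on $\widetilde C^{(3)}$ and $C^{(3)}$, read off $f^{(3)}$ as $(a_1,a_2,a_3)\mapsto(a_1^2,a_2,a_3)$ (resp.\ $(t_1,t_2,x)\mapsto(t_1^2,t_2^2,x)$) and intersect with the small diagonal; the resulting local equation $a_1^2=a_2=a_3$ (resp.\ $t_1^2=t_2^2$) makes the smoothness/node dichotomy immediate. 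Your argument is more elementary and more transparent at the singular fibre; the paper's has the advantage of plugging directly into Welters' framework, so that the ``\'etale'' part of the verification is literally a citation rather than a chart calculation.

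One point to tighten: you invoke ``since $C$ is general'' to impose simple ramification of $h$ and to place $h(p_1),h(p_2)$ outside the branch locus of $h$, but neither is part of the hypothesis of the proposition. The simple-ramification restriction is harmless (the triple-ramification fibres are handled by the same chart method, or by Welters), but the second assumption deserves care: your own remark that a collision $h(p_i)\in\mathrm{Branch}(h)$ would create an extra node shows that this case is not automatically covered. The paper has the same implicit assumption (it writes the fibre as $p+q+r$ with three distinct points), so this is not a defect relative to the paper, but if you want a proof valid under the stated hypothesis alone you should either treat that collision case explicitly or note that the statement tacitly excludes it. Finally, your closing consistency check is slightly off in the special case: $Y$ is in fact reducible there (two genus-$g$ components meeting in two points), so the normalisation is disconnected; the arithmetic-genus count $2g-1$ is still correct, but the phrase ``as predicted by Theorem~\ref{mainthm}(b)'' should refer to part~(a).
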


\begin{proof}
The proof is very similar to the proof of \cite[Proposition on p.107]{w} where Welters proves 
that $Y$ is smooth when $f$ is an \'etale double cover of a $d$-gonal curve (with some assumptions on the ramification of $h$).
In particular, $Y$ is smooth for $d=3$.
As we will see, the main difference lies in the fact that,  if a fibre of $h$ contains both branch points $p_1$ and $p_2$ of $f$,
the curve $Y$ acquires two nodes; otherwise, $Y$ is smooth.

Let $D_i$ be the fibre of $h$ passing through $p_i$, we denote by the same letter the divisor
on $C$ as well as the corresponding point of $C^{(3)}$. Of course $D_1 = D_2$ if $h \circ f$ is special. 
Let $\widetilde D_i^j$ denote the points of $Y$ 
defined by $D_i$ for $j = 1, \dots, \nu_i$ ($\nu_i = 2$, respectively $\nu_i=4$, if $h \circ f$ is special,
respectively general). 
The same proof as in \cite{w} works for all points of 
$Y \setminus (\cup_{i,j} \widetilde D_i^j)$. We shall show that $Y$ is smooth at the points 
$\widetilde D_i^j$ if $h \circ f$ is general and nodal if it is special.

So let $D$ be one of the points $D_i \in C^{(3)}$ and $\widetilde D \in \widetilde C^{(3)}$ a point above 
it. Suppose 
$$
D = p + q + r \quad \mbox{and} \quad \widetilde D = \widetilde p + \widetilde q + \widetilde r.
$$
where $p$ is one of the branch points of $f$, so $f^*(p) = 2\tilde{p}$.
The Zariski  tangent spaces yield the cartesian diagram
\begin{equation} \label{d5.1}
\xymatrix{
T_Y(\widetilde D) \ar@{^{(}->}[r] \ar[d]_{df} & T_{\widetilde C^{(3)}}(\widetilde D) \ar[d]^{df^{(3)}}\\
T_{g_3^1}(D) \ar@{^{(}->}[r]_{dj} & T_{c^{(3)}}(D)
}
\end{equation} 
where $j: g_3^1 \hra C^{(3)}$ denotes the inclusion map. The curve $Y$ will be smooth at 
$\widetilde D$ if and only if $\dim T_Y(\widetilde D) = 1$. 
It is easy to see 
(\cite[p. 104]{w}) that this is the case if and only if 
\begin{equation} \label{e5.2}
T_{C^{(3)}}(D) = dj T_{g_3^1}(D) + df^{(3)} T_{\widetilde C^{(3)}}(\widetilde D)
\end{equation}

According to deformation theory the lower-right triangle of diagram \eqref{d5.1} is given by
\begin{equation} \label{d5.2}
\xymatrix{
&& H^0(\cO_{\widetilde D}(\widetilde D) ) \ar[d]^\beta \\
H^0(\cO_C(D))/H^0(\cO_C) \ar@{^{(}->}[rr]^(.6)\alpha && H^0(\cO_D(D))
}
\end{equation}
where $\alpha$ is the canonical map. In order to define the map $\beta$ precisely, 
consider $\cO_C(D)$ (respectively $\cO_{\widetilde C}(\widetilde D)$) as a subsheaf of the rational function field $\cR_C$ of the curve $C$ (respectively $\cR_{\widetilde C}$ of the curve $\widetilde C$)
by putting for each point $p \in C$ and similarly for each $\widetilde p \in \widetilde C$, 
$$
\cO_C(D)_p := \mathfrak{m}_{C,p}^{-\nu_p(D)} \quad \mbox{and} \quad 
	\cO_{\widetilde C}(\widetilde D)_{\widetilde p} := \mathfrak{m}_{\widetilde C,\widetilde p}^{-\nu_{\widetilde p}(\widetilde D)}
$$
where $\mathfrak{m}_{C,p}$ respectively $\mathfrak{m}_{\widetilde C, \widetilde p}$ is the maximal ideal in 
$\cO_{C,p}$ respectively $\cO_{\widetilde C, \widetilde p}$. Translating diagram \eqref{d5.2}
to $\cO_C$ and $\cO_{\widetilde C}$ gives the diagram
$$
\xymatrix{
&& \mathfrak{m}_{\widetilde C,\widetilde p}^{-1}/\cO_{\widetilde C, \widetilde p} \oplus 
\mathfrak{m}_{\widetilde C,\widetilde q}^{-1}/\cO_{\widetilde C, \widetilde q} \oplus
\mathfrak{m}_{\widetilde C,\widetilde r}^{-1}/\cO_{\widetilde C, \widetilde r} \ar[d]^\beta\\
H^0(\cO_C(D))/H^0(\cO_C) \ar@{^{(}->}[rr]^(.4)\alpha &&  
\mathfrak{m}_{ C, p}^{-1}/\cO_{ C, p} \oplus \mathfrak{m}_{ C, q}^{-1}/\cO_{ C, q} \oplus
\mathfrak{m}_{ C, r}^{-1}/\cO_{ C, r} 
}
$$

where  $\beta$ is given by the transposition of the natural map 
$$
\beta^{*}: \Omega^1_{C,p} / \mathfrak{m}_{C,p}^{1} \oplus  \Omega^1_{C,q} / \mathfrak{m}_{C,q}^{1} \oplus \Omega^1_{C,r} / \mathfrak{m}_{C,r}^{1}
\ra \Omega^1_{\tC,\tilde p} / \mathfrak{m}_{\tC,\tilde p}^{1} \oplus  \Omega^1_{\tC,\tilde q} / \mathfrak{m}_{\tC,\tilde q}^{1} \oplus \Omega^1_{\tC,
\tilde r} / \mathfrak{m}_{C,r}^{1}
$$
induced by $f$. Let $t_p, t_q, t_r$ be local parameters around $p,q$ and $r$ respectively.
Then $ t_{\tilde p} := t^{\nu(p)} = t^2$ is a local parameter around  $\tilde p$, since $f$ is has
ramification index 2 at $\tilde p$.  We claim that $\beta^{*}$ restricted to the summand  $\Omega^1_{C,p} / \mathfrak{m}_{C,p}^{1} $ is
not injective, hence $\beta$ restricted to $\Omega^1_{\tC,\tilde p} / \mathfrak{m}_{\tC,\tilde p}^{1} $ cannot be surjective.
Indeed, if $dt_p$ is a generator of the one-dimensional space $\Omega^1_{C,p} / \mathfrak{m}_{C,p}^{1} $, then 
$$
\beta^{*} (dt_p) = dt_{\tilde p} =  d(t_p^2) = 2t_p dt_p 
$$
vanishes modulo $\mathfrak{m}_{C, p}^{1} $, therefore the restriction is zero. In particular, the restriction of  $\beta$
to $\Omega^1_{\tC,\tilde p} / \mathfrak{m}_{\tC,\tilde p}^{1} $ is zero, since the target is one-dimensional. 

\bigskip

{\it (1) Suppose $h\circ f$ is general}.  In this case $f$ is \'etale at $\tilde q$ and $\tilde r$.
Let $t_{\tilde q}$ and   $t_{\tilde r}$ local parameters around $\tilde q$ and $\tilde r$. 
We have that  
$$
\beta ( t_{\tilde p}) =0,  \qquad \beta ( t_{\tilde q}) = t_q, \qquad \beta ( t_{\tilde r}) = t_r.
$$
Then $\beta $ is surjective onto the summands $\mathfrak{m}_{ C, q}^{-1}/\cO_{ C, q} $ and $\mathfrak{m}_{ C, r}^{-1}/\cO_{ C, r}$,
but not onto the first one, where $\beta$ vanishes.

The curve $Y$ is smooth at $\tilde D$ if and only if the composition of $\alpha$ with the cokernel of $\beta$ is surjective,
i.e. if and only if the map
\begin{equation} \label{surj-map1}
H^0(\cO_C(D))/H^0(\cO_C) \ra \mathfrak{m}_{ C, p}^{-1}/\cO_{ C, p} 
\end{equation}
is surjective. As a vector space the target is generated by $t_p^{-1} $. Let  $\psi \in H^0(\cO_C(D))$ be a section with corresponding
divisor $\bar D= \divi{\psi} +D $. Its image $\bar \psi$ in $\mathfrak{m}_{C,p}^{-1}/\cO_{ C, p} $ can be written 
as  $\bar \psi = c_k t^{-k}$ with  $c_k \neq 0. $ So the map \eqref{surj-map1} is surjective if such function exists with $k = 1$.
As $\nu_p(\bar D) = \nu_p (\psi) + \nu_p(D) = - k +  1 \geq 0$, it is enough to take a divisor $\bar D$ with $\nu_p(\bar D)=0$, i.e.
which does not contain $p$.  Therefore $Y$ is smooth at $\tilde D$.  

\bigskip

{\it (1) Suppose $h\circ f$ is special}. In this case we can assume $p=p_1$ and $q=p_2$  are the branch points of $f$. Then
$$
\beta ( t_{\tilde p}) = 0, \qquad \beta ( t_{\tilde q}) =0, \qquad \beta ( t_{\tilde r}) = t_r
$$
and  $Y$ is smooth at $\tilde D$ if and only if the map
\begin{equation}\label{surj-map}
H^0(\cO_C(D))/H^0(\cO_C) \ra \mathfrak{m}_{ C, p}^{-1}/\cO_{ C, p} \oplus  \mathfrak{m}_{ C, q}^{-1}/\cO_{ C, q}
\end{equation}
is surjective. Now the target space has as basis the vectors $(t_p^{-1}, 0 ), (0, t_q^{-1})$, so for the surjectivity one requires
the existence of a section $\psi \in H^0(\cO_C(D))$ such that its corresponding divisor $\bar D$ satisfies
$$
\nu_p(\bar D) = \nu_p (\psi) + \nu_p(D) = -1 + 1 =0, \qquad  \nu_q(\bar D) = \nu_q (\psi) + \nu_q(D) = 0 + 1 =1,
$$
that is, a divisor $\bar D$ in the fibre of $h$ containing $q$ but not $p$, which gives a contradiction since the linear series $g^1_3$
defining $h: C \ra \PP^1$ is base-point-free\footnote{Assuming that $C$ is non-hyperelliptic}.

In consequence, $Y$ has 2 nodal singularities at the divisors $\tilde D_1^1 $ and $\tilde D_1^2 $ over $D=p_1 +p_2 +r$.
\end{proof}

\begin{prop} \label{genus}
Assume $h\circ f$ is a general, then $g_Y = 2g+1$.
\end{prop}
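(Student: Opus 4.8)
The plan is to compute $g_Y$ via the Riemann--Hurwitz formula applied to the degree $8$ map $\widetilde k: Y \ra \PP^1$ of diagram \eqref{d2.1}. Since $h\circ f$ is general, $Y$ is smooth by Proposition \ref{p5.1}(1), so
$$
2g_Y-2 \;=\; 8\cdot(-2)+\deg R_{\widetilde k}\;=\;-16+\deg R_{\widetilde k},
$$
and the whole problem reduces to determining the ramification divisor $R_{\widetilde k}$.

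First I would pin down the branch locus of $\widetilde k$. If $a\in\PP^1$ is a point over which $h$ is unramified and $a\neq h(p_1),h(p_2)$, then $h^{-1}(a)$ consists of three distinct points, $f$ is \'etale over each of them, and the $2^3=8$ sections over $a$ are pairwise distinct; hence $\widetilde k$ is unramified over $a$. Therefore $R_{\widetilde k}$ is supported on the fibres over the $2g+4$ branch points of $h$ (all simple, since $h$ is general) together with the two points $h(p_1),h(p_2)$.

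Next come the local contributions. Over a simple branch point $b$ of $h$, write $h^{-1}(b)=2x_1+x_2$ with $x_1,x_2\notin\{p_1,p_2\}$ and $f^*x_i=\tilde x_i'+\tilde x_i''$. Working in a neighbourhood of $b$ and using that $f$ is \'etale over $x_1$ — this is exactly Welters' local analysis in \cite{w} for the \'etale case, which is insensitive to the ramification of $f$ — one sees that $\widetilde k^{-1}(b)$ consists of six points: the two divisors $\tilde x_1'+\tilde x_1''+\tilde x_2'$ and $\tilde x_1'+\tilde x_1''+\tilde x_2''$, at which $\widetilde k$ ramifies with index $2$, together with four further unramified points; so each branch point of $h$ contributes $2$ to $\deg R_{\widetilde k}$, for a total of $2(2g+4)$. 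Over $h(p_i)$, the proof of Proposition \ref{p5.1}(1) shows that the fibre consists of the four smooth points $\widetilde D_i^{\,1},\dots,\widetilde D_i^{\,4}$; since $f$ has ramification index $2$ over $p_i$, the local-parameter computation carried out there shows that $\widetilde k$ ramifies with index $2$ at each of them, contributing $4$ to $\deg R_{\widetilde k}$, hence $8$ in total. Adding up, $\deg R_{\widetilde k}=2(2g+4)+8=4g+16$, so $2g_Y-2=-16+4g+16=4g$ and $g_Y=2g+1$.

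The only genuine obstacle is the fibre analysis over the branch points of $h$: one must check that exactly two of the three preimages over $x_1$ collide, that the resulting divisor is a smooth point of $Y$, and that $\widetilde k$ ramifies there simply. This is a purely local statement, unaffected by the presence of the ramification of $f$, and so is taken care of by \cite{w}; alternatively one may bypass it entirely by running the same Riemann--Hurwitz bookkeeping for the degree $4$ map $k: X \ra \PP^1$ (the fibres over branch points of $h$ having profile $(2,1,1)$ and those over $h(p_1),h(p_2)$ profile $(2,2)$, giving $\deg R_k=2g+8$ and $g_X=g+1$) and then using that $\pi: Y \ra X$ is an \'etale double cover (Lemma \ref{fixedfree}) to conclude $g_Y=2g_X-1=2g+1$.
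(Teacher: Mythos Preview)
Your proof is correct and follows the same route as the paper: Riemann--Hurwitz for the degree $8$ map $\widetilde k:Y\to\PP^1$, with the ramification contributions $2(2g+4)$ from the branch locus of $h$ and $4+4$ from $h(p_1),h(p_2)$, giving $\deg R_{\widetilde k}=4g+16$ and $g_Y=2g+1$. One small caveat: in this paper ``general'' means only that $p_1,p_2$ lie in distinct fibres of $h$, not that $h$ has simple ramification; the paper accordingly also records the profile $(3,3,1,1)$ over a branch point of $h$ of index $2$, but since this contributes $4$ to $R_{\widetilde k}$ against $2$ to $R_h$, the total $2\,|R_h|=2(2g+4)$ is unchanged and your count survives.
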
 

\begin{proof}
There are 2 types of ramification of $\widetilde k: Y \ra \PP^1$ for a branch point $a$ of 
$\widetilde k$. Either $a$ is a 
branch point of $h$ of ramification index $i = 1$ or 2. Then $a$ is a branch point for $h$ with 
ramification of type $(2,2,1,1,1,1)$ if $i =1$ and $(3,3,1,1)$ if $i =2$.  Or $a = h(p_\nu)$ for 
$\nu = 1$ or 2. Then  $a$ is a branch point for $h$ with ramification of type $(2,2,2,2)$.  
Since $|R_h| = 2g+4$ and $p_1$ and $p_2$ are disjoint from $R_h$, this gives for the ramification divisor 
$R_{\widetilde k}$ of $\widetilde k$: 
$$
|R_{\widetilde k}| = 2(2g+4) + 2 \cdot 4 = 4g + 16
$$
and the result follows from the Hurwitz formula.
\end{proof}

We will see in Section 4, that  in the case of special coverings, $Y$ has arithmetic genus $2g+1$, more precisely,
it is the union of two curves of genus $g$ intersecting in two points.

\section{Donagi's extension of the trigonal construction}

The usual trigonal construction for \'etale double covers of smooth trigonal curves is the following theorem, due to Recillas 
\cite{r}. We recall it for \'etale double covers $f$ of trigonal curves $C'$ of genus $g+1$
instead of genus $g$, since we need it in this case. 

Let $\cR_{g+1}^{tr}$ denote the moduli space of \'etale double covers of smooth
trigonal curves of genus $g+1$,  so it consists of triples $(C, \eta, g^1_3)$, where $\eta \in \Pic^0(C)[2] \setminus \{ 0\}$
(which defines the double covering over $C$)  and $g^1_3$ is a linear series on $C$. Let $\mathcal{G}^1_{g,4}$ denote the moduli
space of pairs $(C, g^1_4)$, with $C$ a smooth curve of genus $g$ together with a  $g^1_4$, and  $\cM_{g,0}^{tet} \subset \mathcal{G}^1_{g,4} $ 
denote the open subspace of tetragonal curves $X$ of genus $g$ with the property that above each point of 
$\PP^1$ there is at least one \'etale point of $X$. In the sequel trigonal and tetragonal curves are understood as a pair of a curve with its linear series.   
In (\cite{r}) Recillas showed 

\begin{thm}
The trigonal construction gives an isomorphism
$$
T^0: \cR_{g+1}^{tr} \ra \cM_{g,0}^{tet}.
$$
Moreover, for each $[h \circ f] \in \cR_{g+1}^{tr}$, $T^0$ induces an isomorphism of principally polarized 
abelian varieties
$$
\Pr(f) \simeq J(T^0(h \circ f)).
$$
\end{thm}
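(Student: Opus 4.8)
The plan is to make the construction $T^0$ and its inverse completely explicit fibrewise over $\PP^1$, show that they are mutually inverse morphisms of moduli spaces, and then deduce the statement about polarizations by means of an incidence correspondence between the curves involved. Fix $[h\circ f]\in\cR^{tr}_{g+1}$, with $h\colon C\to\PP^1$ trigonal, $g(C)=g+1$, and $f\colon\tC\to C$ the \'etale double cover attached to $\eta$, so $g(\tC)=2g+1$. I would run the construction of Section 2 with $f$ now \'etale: form the $8\colon1$ cover $Y\subset\tC^{(3)}$ by pulling back $f^{(3)}\colon\tC^{(3)}\to C^{(3)}$ along $g^1_3\hookrightarrow C^{(3)}$. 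By Lemma \ref{l2.2} (with empty branch locus) the orientation cover $O\to\PP^1$ is now unramified, so $O=\PP^1\sqcup\PP^1$ and $Y=Y^+\sqcup Y^-$ splits into two smooth components, each $4\colon1$ over $\PP^1$ (as in \cite{w}, the monodromy of $\widetilde k$ consists of ``even'' signed permutations). The involution $\iota$ flips the sign in all three coordinates, an odd number of flips, hence interchanges $Y^+$ and $Y^-$, so $X:=Y/\iota\cong Y^+$ is a smooth tetragonal curve; we set $T^0(C,\eta,g^1_3):=(X,g^1_4)$. Hurwitz applied to $Y^+\to\PP^1$ (equivalently $g(\tC)=2g+1$, $g(C)=g+1$) gives $g(X)=g$, and the ramification analysis shows that over every point of $\PP^1$ at least one point of $X$ is \'etale, so $X\in\cM^{tet}_{g,0}$. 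For the inverse I would use the classical reverse trigonal construction: let $C'\to\PP^1$ be the degree-$3$ cover parametrizing unordered partitions of a fibre of $k\colon X\to\PP^1$ into two pairs, and $\tC'\to C'$ the degree-$6$ cover by ordered such partitions (the ``swap the two pairs'' involution), which is an \'etale double cover; this is the quotient of the $S_4$-Galois closure of $k$ by the kernel $V_4$ of $S_4\to S_3$. Here the hypothesis $X\in\cM^{tet}_{g,0}$ is exactly what guarantees that $C'$ is smooth of genus $g+1$ and that $\tC'\to C'$ is \'etale and non-trivial. Comparing the two $\PP^1$-local descriptions fibre by fibre --- an unramified double cover of a $3$-element set versus a $4$-element set with its $S_4$-monodromy --- shows that $T^0$ and the reverse construction are morphisms of moduli spaces inverse to one another.

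For the polarized statement put $P:=\Pr(f)=\ker\bigl(\Nm_f\colon J\tC\to JC\bigr)^0$, of dimension $g(\tC)-g(C)=g=\dim JX$. Viewing a point of $X\cong Y^+$ as an effective degree-$3$ divisor on $\tC$, I introduce the incidence curve $\Gamma:=\{(s,\tilde c)\in X\times\tC:\tilde c\le s\}$, with projections $p\colon\Gamma\to X$ and $q\colon\Gamma\to\tC$, and I would check directly that $p$ is $3\colon1$, that $q$ is $2\colon1$, and that $q$ is the quotient by the involution $\sigma$ of $\Gamma$ which for $s=\tilde c_1+\tilde c_2+\tilde c_3$ sends $(s,\tilde c_1)$ to $(\tilde c_1+\iota\tilde c_2+\iota\tilde c_3,\tilde c_1)$. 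The homomorphism $\gamma:=q_*p^*\colon JX\to J\tC$ is, after a choice of base points, nothing but the map on Jacobians induced by the natural morphism $X=Y^+\hookrightarrow\tC^{(3)}\to\Pic^3(\tC)$; and since $f_*(s)=h^{-1}(\widetilde k(s))$ is a fibre of $h$ for every $s\in Y^+$, one gets $\Nm_f\circ\gamma=0$, so $\gamma$ factors as $JX\xrightarrow{\bar\gamma}P\hookrightarrow J\tC$.

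The remaining step is to identify the polarizations. Because $q$ is the quotient by $\sigma$ we have $q^*q_*=1+\sigma^*$, hence
\[
\gamma^t\gamma=p_*(1+\sigma^*)p^*=p_*p^*+p_*\sigma^*p^*=3\cdot\mathrm{id}+T,\qquad T:=p_*\sigma^*p^*.
\]
Unwinding $\sigma$ gives $T(s)=\widetilde k^*\widetilde k_*(s)-s$, the sum of the three other points of $X$ in the fibre of $\widetilde k$ through $s$; so on divisor classes of degree $0$ one has $T=-\mathrm{id}$ and therefore $\gamma^t\gamma=2\cdot\mathrm{id}_{JX}$. In particular $\bar\gamma$ is an isogeny onto $P$; and since by Mumford's theorem the restriction of the principal polarization of $J\tC$ to $P$ equals $2\Xi$, the relation $\gamma^t\gamma=2\,\mathrm{id}$ yields $2\,\bar\gamma^*\Xi\equiv2\,\Theta_X$, i.e.\ $\bar\gamma^*\Xi\equiv\Theta_X$; as $\Xi$ and $\Theta_X$ are principal polarizations this forces $\deg\bar\gamma=1$, so $\bar\gamma\colon(JX,\Theta_X)\xrightarrow{\sim}(P,\Xi)$ is an isomorphism of principally polarized abelian varieties.

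The main obstacle is twofold. First, making the fibrewise identification of $T^0$ with the reverse construction airtight requires controlling the special fibres --- fibres over branch points of $h$, and fibres of $k$ containing ramification --- where one must check that $Y^+$, respectively $C'$, stays smooth and the monodromy is exactly as predicted; this is the delicate local computation of Welters \cite{w} and of Donagi \cite{d}, which the present paper re-runs in the ramified case. Second, on the polarized side the subtle point is upgrading $\gamma^t\gamma=2\,\mathrm{id}$ from an isogeny statement to an isomorphism of principally polarized abelian varieties, which needs care with the relation between the transpose $\gamma^t$ taken with respect to the canonical polarizations of the Jacobians and the transpose of $\bar\gamma$ with respect to $\Xi$, together with the identity $\Theta_{\tC}|_P\equiv2\Xi$; alternatively one can verify directly that $\bar\gamma$ carries the theta divisor of $JX$ onto $\Xi$ by degenerating to an explicitly understood member of the family.
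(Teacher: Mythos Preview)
The paper does not give its own proof of this theorem: it is stated in Section~3 as Recillas's classical result and simply attributed to \cite{r}, with Donagi's extension \cite{d} recalled immediately after. So there is no in-paper proof to compare against; your proposal is effectively a reconstruction of the classical argument.

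Your sketch is broadly correct and follows the standard Recillas--Beauville line: construct $X$ as one component of $Y$, invert via pair-partitions of the tetragonal fibres, and prove the polarized isomorphism by the incidence correspondence $\Gamma\subset X\times\tC$, computing $\gamma^t\gamma=2\cdot\mathrm{id}_{JX}$ and combining with $\Theta_{\tC}|_{P}\equiv 2\Xi$. The identification of $T$ with ``the other three points in the $g^1_4$-fibre'' and hence $T=-\mathrm{id}$ on degree-$0$ classes is right. The two places you flag as obstacles are genuine but well-handled in the literature: smoothness and the monodromy description over the special fibres are exactly what Welters \cite{w} checks, and the passage from $\gamma^t\gamma=2$ to $\bar\gamma^*\Xi\equiv\Theta_X$ is straightforward once one writes $\gamma=i_P\circ\bar\gamma$ and uses $i_P^*\Theta_{\tC}\equiv 2\Xi$.

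It is worth noting, by way of contrast, that when the paper proves the analogous statement in the \emph{ramified} case (Theorem~\ref{thm5.1}), it does \emph{not} use the correspondence method you outline. Instead it computes the cohomology class $\varphi_*[Y]$ in $H^{2g-2}(P(f),\ZZ)$ via Macdonald's formula for $[g^1_3]$ in $H^4(C^{(3)},\ZZ)$, Poincar\'e's formula, and the decomposition $2[\widetilde\Theta]=\Nm_f^*[\Theta]+\widehat\iota_{P(f)}^*[\Xi_f]$, then invokes Welters' criterion \cite[12.2.2]{bl} to conclude. Your correspondence approach is arguably more transparent geometrically and gives the isomorphism $\bar\gamma$ explicitly; the paper's cohomological computation is more uniform (it works whether or not $Y$ splits) and avoids having to verify directly that $\bar\gamma$ is injective rather than merely an isogeny of square degree~$1$.
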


\begin{rem}
Recall that  curves of genus $g\geq 5$ (respectively  $g\geq 7$) admit at most  a $g^1_3$  (respectively a  $g^1_4$) and they form 
a closed subset in the moduli of smooth curves $\cM_g$.  So for high genus one can identify the moduli  space  $ \cM_{g,0}^{tet}$
with its corresponding closed subspace in $\cM_g $ and something similar holds for $\cR_{g+1}^{tr}$.
On the other hand, any curve of genus $g\leq  4$ (respectively  $g\leq 6$)  possesses a $g^1_3$ (respectively  a $g^1_4$) and in this case
the isomorphism $T_0$ sends triples $(C, \eta,  g^1_3)$ to pairs $(X, g^1_4)$.
\end{rem}

In \cite{d} Donagi extended the map $T^0$ to the partial compactification 
consisting of admissible double covers of trigonal curves of genus $g+1$, whose Prym variety is an element in $\cA_g$, the moduli
space of principally polarized abelian varieties of dimension $g$. 
Recall that an {\it admissible double cover} of a connected stable curve $C'$  is a double cover $\pi: \widetilde C' \ra C'$  with $\widetilde C'$ stable such that
\begin{itemize}
\item the nodes of $\widetilde C'$ map under $\pi$ exactly to the nodes of $C'$;
\item away from the nodes $\pi$ is an \'etale double cover.
\end{itemize}
Let $\overline \cR_{g+1}^{tr}$ denote the moduli space of admissible double covers of curves of 
arithmetic genus  $g+1$, whose  Prym variety is in $\cA_g$\footnote{In Donagi's work  these coverings are called ``allowable''.
Often the coverings with this property are also referred to as  ``admissible''. We use this notion.}
and let $\cM_{g}^{tet}$ denote the moduli space of smooth tetragonal curves  of genus $g$.
Donagi showed in \cite[Theorem 2.9]{d},

\begin{thm}
The trigonal construction gives an isomorphism 
$$
T: \overline \cR_{g+1}^{tr} \ra \cM_{g}^{tet}.
$$
\end{thm}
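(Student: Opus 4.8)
The plan is to establish the isomorphism $T\colon \overline{\cR}_{g+1}^{tr}\to \cM_g^{tet}$ by showing that Recillas' isomorphism $T^0$ between the open loci extends across the boundary, and that on the tetragonal side the ``extra'' locus obtained by degenerating admissible covers is exactly the locus of tetragonal curves possessing a fibre with no \'etale point. First I would recall the bijective correspondence underlying $T^0$ at the level of fibres: over a point $a\in\PP^1$, a trigonal fibre $h^{-1}(a)=\{x_1,x_2,x_3\}$ together with the restriction of the \'etale $2:1$ cover $\widetilde C'\to C'$ assigns to each $x_i$ a $2$-element set, and the $g_4^1$ on the tetragonal curve $X$ is recovered by taking the set of ``even'' (or ``sign-balanced'') sections, exactly as in the construction of $Y$ in Section 2 but with $\iota$-equivalence classes. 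The key observation is that this fibrewise recipe makes sense verbatim for an admissible double cover over a \emph{stable} trigonal curve: at a node of $C'$ lying over $a$, the admissibility condition (nodes map to nodes, \'etale away from nodes) forces the local picture to be one where the trigonal fibre degenerates, and the corresponding tetragonal fibre acquires a ramification point instead of a node — so $X$ stays smooth while losing an \'etale point over $a$.

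The main steps, in order, would be: (1) construct the functor $T$ on families by globalizing the fibrewise recipe — i.e.\ form the relative version of the diagram \eqref{d2.1}/\eqref{d2.2} for the admissible cover and verify, using Proposition~\ref{p5.1}-type local computations at the nodes, that the resulting tetragonal curve $X$ together with its $g_4^1$ is smooth of genus $g$; (2) construct the inverse $T^{-1}$ by, given $(X,g_4^1)\in \cM_g^{tet}$, building the associated curve $Y$ (the $8:1$ cover of $\PP^1$ parametrizing even sections, with its involution $\iota$) and setting $C' := $ the quotient or the appropriate trigonal model, together with the admissible double cover — here one must check that when a fibre of $k\colon X\to\PP^1$ has no \'etale point, the trigonal curve $C'$ acquires a node over that point and the double cover becomes admissible in the stated sense; (3) check $T\circ T^{-1}=\mathrm{id}$ and $T^{-1}\circ T=\mathrm{id}$ on the boundary, which reduces to a purely combinatorial check of the fibre recipes plus the fact that both constructions restrict to $T^0$, $(T^0)^{-1}$ on the open loci where everything is \'etale; (4) verify that $\overline{\cR}_{g+1}^{tr}$ as defined (admissible covers with Prym in $\cA_g$) is precisely the source needed — i.e.\ the ``Prym in $\cA_g$'' condition is equivalent to the combinatorial constraint that makes the corresponding $X$ smooth — so that the extension is an isomorphism of moduli spaces and not merely a dominant map. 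Since the statement as quoted does not assert anything about Prym varieties, I would not need the ppav refinement here, only the isomorphism of moduli spaces.

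I would expect the main obstacle to be step (2), the construction of the inverse across the boundary: given a tetragonal curve $X$ with a bad fibre, one must produce a \emph{stable} curve $C'$ and an admissible double cover of it in a functorial way, and the delicate point is the local analysis at the point of $\PP^1$ over which $X$ has only ramification points — one has to see that the natural trigonal model degenerates by acquiring exactly one node (so that $p_a(C')=g+1$ is preserved), that the induced double cover is \'etale away from that node and identifies the two branches there, and that no spurious components appear in $Y$. This is essentially the reverse of the node-creating local computation carried out in the proof of Proposition~\ref{p5.1}, and the bookkeeping of which fibre types of $k\colon X\to\PP^1$ produce nodes on $C'$ versus nodes on $\widetilde C'$ is where the argument is most likely to be subtle. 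Once the local models at the boundary are pinned down, gluing them into a global family and checking the two compositions are identities is formal, and one invokes Recillas' theorem on the open part to conclude that $T$ is an isomorphism.
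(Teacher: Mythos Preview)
The paper does not prove this theorem; it is quoted as Donagi's result \cite[Theorem~2.9]{d}, and the surrounding text only recalls the constructions of $T$ and $T^{-1}$ together with the list (1)--(5) of corresponding fibre types. So there is no ``paper's own proof'' to compare against, only a brief summary of Donagi's construction.

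That said, your outline is broadly in the right spirit but contains a real confusion in step~(2). The inverse $T^{-1}$ is \emph{not} obtained by first building an $8{:}1$ cover $Y$ ``parametrizing even sections'' and then passing to a quotient: that is the recipe for the \emph{forward} map $T$ (trigonal side to tetragonal side). The inverse, as the paper recalls following Donagi, starts from the tetragonal cover $k'\colon X'\to\PP^1$ and forms the \emph{relative second symmetric product} $S^2_{\PP^1}X'_0$ over the open locus $\PP^1_0$ of good fibres; its closure is $\widetilde C'$, which carries an obvious involution (swap the two points of a pair) whose quotient is $C'$, and the factorization $\widetilde C'\to C'\to\PP^1$ gives the admissible trigonal tower. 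The local analysis you anticipate in your ``main obstacle'' paragraph is then exactly the check that over a fibre of $k'$ with two simple ramification points (type~(4)) or a triple ramification point (type~(5)), the relative symmetric square acquires a node on $C'$ rather than producing a singular $X'$, and that the resulting double cover is admissible. Your step~(4), matching the ``Prym in $\cA_g$'' condition to smoothness of $X'$, is the right closing observation, and indeed Donagi's fibre-type list is precisely this dictionary.
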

Moreover, he showed that if $\widetilde C' \stackrel{f'}{\ra} C' \stackrel{h'}{\ra} \PP^1$ corresponds to
the tetragonal curve $k': X' \ra \PP^1$ and $p$ is a point of $\PP^1$, then the following types of fibers over $p$
of $f', k'$ and $h'$ correspond to each other,
\begin{enumerate}
\item $f',k'$ and $h'$ are \'etale;
\item $k'$ and $h'$ have simple ramification points, $f'$ is \'etale;
\item $k'$ and $h'$ each have a ramification point of index 2, $f'$ is \'etale;
\item $k'$ has two simple ramification points, the fibre of $h'$ consists of a simple node and a smooth 
point, $f'$ is admissible;
\item $k'$ has a ramification point of index 3, the fibre of $h'$ consists of a node and at exactly one 
branch of it $h'$ is ramified, $f'$ is admissible.
\end{enumerate}

In particular, if $h'$ admits nodes of types (4) or (5), the covering $f'$ is admissible. By \cite{b},
its Prym variety, defined again as the component of the norm map containing 0, is a 
principally polarized abelian variety of dimension $g$, where $g+1$ is the arithmetic genus of 
$C'$. 

The {\it trigonal construction} is formally the same as in the smooth case.
Let $\widetilde C' \stackrel{f'}{\ra} C' \stackrel{h'}{\ra} \PP^1$ be an element of 
$ \overline \cR_{g+1}^{tr}(2)$ and define $Y'$ in the same way as we defined $Y$ in diagram \eqref{d2.1}. The involution of $\widetilde C'$ induces an involution 
on $Y'$ of which $X'$ is the quotient. (Donagi defines $X'$ as follows: The restriction 
$f'^0: \widetilde C'^0 \ra C'^0$ of $f'$ to the smooth parts can be considered as a local system on $C'^0$. 
Its push-forward $h'_*  \widetilde C'^0$ to $\PP^1$ admits an involution $\iota$. Then $X'$ is defined as the completion of the quotient $h'_*  \widetilde C'^0/\langle \iota \rangle$).

Conversely, the inverse $T^{-1}$ of $T$ is given as follows. Given $k': X' \ra \PP^1$ in 
$\cM_g^{tet}$, we denote by  $k'_0: X'_0 \ra \PP^1_0$ its restriction to  the open part consisting of fibres 
of  types (1), (2) and (3). Let $\widetilde C'$ denote the closure of $ S^2_{\PP^1}X'_0$,  the relative second symmetric product 
of $X'_0$ over $\PP^1$. Any fibre over a point 
$p \in \PP^1_0$ consists of all unordered pairs in $k'^{-1}(p)$ and $\widetilde C'$ admits an obvious 
involution of which $C'$ is the quotient. Moreover, the cover $\widetilde C' \ra \PP^1$ factorizes via  
maps $f': \widetilde C' \ra C'$ and $h':C' \ra \PP^1$. The tower 
$\widetilde C' \stackrel{f'}{\ra} C' \stackrel{h'}{\ra} \PP^1$ is then an element of  
$\overline \cR_{g+1}^{tr}$.

The same proof as in the smooth case gives an isomorphism
\begin{equation} \label{e3.1}
\Pr(f') \simeq J(T(h' \circ f')).
\end{equation}

In the next section we need the following corollary. Let 
$$
\cS_{g+1}^{tr} \subset  \overline \cR_{g+1}^{tr}
$$ 
the locally closed subset
of $ \overline \cR_{g+1}^{tr}$ consisting of all elements of type (4), i.e. all admissible double 
covers $f': \widetilde C' \ra C'$ of 
trigonal  curves $C'$ of arithmetic genus $g+1$ such that $C'$ admits exactly one node such that the 
fibre of $h'$ containing the node contains also a smooth point.  Let 
$\cM_{g,1}^{tet}$ be the corresponding 
subset of tetragonal covers $g': X' \ra \PP^1$ with exactly one fibre consisting of two simple ramification points and otherwise only fibres of types 
(1), (2) and (3). So we get

\begin{cor} \label{c3.3}
Restricting the trigonal construction to $\cS_{g+1}^{tr}$ gives an isomorphism 
$$
T: \cS_{g+1}^{tr} \ra \cM_{g,1}^{tet}
$$
inducing for every element of $\cS_{g+1}^{tr}$ the isomorphism \eqref{e3.1}.
\end{cor}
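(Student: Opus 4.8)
The statement is essentially a formal consequence of Donagi's theorem together with the dictionary of fibre types (1)--(5) recalled above. Recall that $T: \overline{\cR}_{g+1}^{tr} \ra \cM_g^{tet}$ is an isomorphism and that the isomorphism \eqref{e3.1} of principally polarized abelian varieties holds for \emph{every} element of $\overline{\cR}_{g+1}^{tr}$. The plan is therefore to prove that $T$ restricts to a bijection $\cS_{g+1}^{tr} \ra \cM_{g,1}^{tet}$; since $T$ is an isomorphism and both $\cS_{g+1}^{tr}$ and $\cM_{g,1}^{tet}$ are locally closed, the restriction is then automatically an isomorphism, and the assertion about Prym varieties is simply the restriction of \eqref{e3.1}.

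First I would verify $T(\cS_{g+1}^{tr}) \subseteq \cM_{g,1}^{tet}$. Let $\widetilde{C'} \stackrel{f'}{\ra} C' \stackrel{h'}{\ra} \PP^1$ be an element of $\cS_{g+1}^{tr}$: by definition $C'$ has exactly one node, the fibre of $h'$ through it also contains a smooth point (so this is a type-(4) fibre), $f'$ is admissible there, and over every other point of $\PP^1$ the fibres are of types (1), (2) or (3). Write $g': X' \ra \PP^1$ for $T(h' \circ f')$. By the fibre-type correspondence of \cite{d} recalled above, the type-(4) fibre of $h'$ corresponds to a fibre of $g'$ consisting of two simple ramification points, while the fibres of $h'\circ f'$ of types (1), (2), (3) correspond to fibres of $g'$ of types (1), (2), (3) respectively. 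Since $X' \in \cM_g^{tet}$ is a smooth tetragonal curve of genus $g$ and, by the above, $g'$ has exactly one fibre consisting of two simple ramification points and otherwise only fibres of types (1), (2), (3), we conclude $[X', g^1_4] \in \cM_{g,1}^{tet}$.

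Conversely, reading the same correspondence backwards gives $T^{-1}(\cM_{g,1}^{tet}) \subseteq \cS_{g+1}^{tr}$: the unique fibre of $g'$ consisting of two simple ramification points forces the corresponding fibre of $h'$ to be of type (4), so $C'$ acquires exactly one node, lying in a fibre of $h'$ containing also a smooth point, and $f'$ is admissible there; the remaining fibres, of types (1), (2), (3), keep $C'$ smooth and $f'$ \'etale over the rest of $\PP^1$. Hence $T^{-1}([X',g^1_4]) \in \cS_{g+1}^{tr}$. Therefore $T$ induces a bijection $\cS_{g+1}^{tr} \ra \cM_{g,1}^{tet}$; being the restriction of the isomorphism $T$ to a locally closed subset with locally closed image, it is itself an isomorphism. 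Finally, applying \eqref{e3.1} to elements of $\cS_{g+1}^{tr}$ yields the claimed isomorphism $\Pr(f') \simeq J(T(h' \circ f'))$ of principally polarized abelian varieties.

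I do not expect a genuine obstacle here: the whole content is the fibre-type bookkeeping matching the defining conditions of $\cS_{g+1}^{tr}$ and of $\cM_{g,1}^{tet}$ against items (1)--(5), and that is furnished by Donagi's list. The only point deserving a word of care is that $T$ and $T^{-1}$ are constructed scheme-theoretically --- via relative symmetric products over $\PP^1$, push-forward local systems, and completion --- in a manner compatible with the local structure over each point of $\PP^1$, so that ``restriction of an isomorphism to a locally closed subset'' is legitimate in this setting; but this compatibility is already part of Donagi's Theorem and of the explicit description of $T^{-1}$ recalled above.
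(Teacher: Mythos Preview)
Your proposal is correct and follows exactly the paper's approach: the corollary is stated as an immediate consequence (``So we get'') of Donagi's isomorphism $T$ together with the fibre-type dictionary (1)--(5), and you have simply spelled out the bookkeeping in detail. There is nothing to add.
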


\section{Proof of the main theorem for special ramified covers}

Let $C$ be a smooth trigonal curve of genus $g$ with trigonal cover $h:C \ra \PP^1$ and let 
$f: \widetilde C \ra C$ be a special ramified double cover, i.e. the two branch points of $f$ are 
contained in a fibre of $h$. Suppose $p_1, p_2 \in C$ are the branch points  of $f$ and 
$q_1, q_2 \in \widetilde C$ the corresponding ramification points. Let 
$\cR b^{tr}_{g,sp}$ denote the closed subset of the variety $\cR b^{tr}_g$ consisting of special
covers.

Define new curves $\widetilde C'$ and $C'$ by identifying the points:
$$
\widetilde C' := \widetilde C/(q_1 \sim q_2) \quad \mbox{and} \quad C' := C/(p_1 \sim p_2).
$$ 
The covering $f$ induces a covering $f': \widetilde C' \ra C'$ which clearly is an admissible double
cover. Similarly $h$ induces a trigonal cover $h': C' \ra \PP^1$, such that the tower 
$\widetilde C' \stackrel{f'}{\ra} C' \stackrel{h'}{\ra} \PP^1$ is an element of $\cS_{g+1}^{tr}$.
So we get a map
$$
n: \cR b^{tr}_{g,sp} \ra \cS_{g+1}^{tr}.
$$
\begin{lem} \label{l4.1}
The map $n: \cR b^{tr}_{g,sp} \ra \cS_{g+1}^{tr}$ is an isomorphism.
\end{lem}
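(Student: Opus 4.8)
The plan is to exhibit an explicit inverse to $n$ and check it is a morphism of moduli spaces, compatibly with the extra data (the trigonal structure and the double-cover structure). First I would observe that the construction of $n$ is manifestly algebraic: gluing two disjoint points of a smooth curve produces a stable curve of arithmetic genus one higher, the map $f$ descends because $f(p_1)\ne f(p_2)$ are being identified to a single node whose two preimages $q_1,q_2$ are glued to a single node, and $h$ descends because $p_1,p_2$ lie in a common fibre of $h$ in the special case, so $h'$ is a well-defined finite degree-$3$ map; the resulting tower visibly lands in $\cS_{g+1}^{tr}$ (exactly one node, in a fibre of $h'$ also containing a smooth point, and $f'$ admissible). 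So $n$ is a morphism.

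Next I would build the inverse. Given an element $\widetilde C'\stackrel{f'}{\ra}C'\stackrel{h'}{\ra}\PP^1$ of $\cS_{g+1}^{tr}$, the curve $C'$ has a single node $p'$ lying in a fibre of $h'$ that also contains a smooth point; since $f'$ is admissible, $\widetilde C'$ has a single node $q'$ above $p'$. Normalizing both curves at these nodes yields smooth curves $C:=\widetilde{C'}^{\,\nu}$-downstairs (I mean the normalization of $C'$) of genus $g$ and $\widetilde C$ of genus $g$, with $f$ the induced double cover; the two points $p_1,p_2\in C$ over $p'$ are, by admissibility, the branch points of $f$, and their preimages $q_1,q_2$ are the ramification points. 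Because $p'$ lies in a fibre of $h'$ also containing a smooth point, the pullback of $g^1_3$ on $C'$ to $C$ is again a base-point-free $g^1_3$ (here one uses that the node is a "separating-type" node for the linear series in the sense that both $p_1,p_2$ lie in one fibre), giving $h:C\ra\PP^1$ with $p_1,p_2$ in a single fibre and disjoint from $R_h$ (the latter because over $h'(p')$ the map $h'$, hence $h$, is unramified at the relevant points — this is built into type (4)). Thus $(\widetilde C\ra C\ra\PP^1)\in\cR b^{tr}_{g,sp}$, and this assignment is algebraic and inverse to $n$ on points.

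To upgrade "bijective on points" to "isomorphism of moduli spaces" I would note that both spaces carry natural moduli (stack or coarse space) structures and that the two constructions — gluing, and normalizing — are functorial in families: a family of special ramified trigonal double covers produces a family of admissible covers by gluing along the two disjoint sections of branch points (which are disjoint by hypothesis), and conversely a family in $\cS_{g+1}^{tr}$ has a well-defined locus of the node as a section, along which one normalizes. Hence $n$ and its inverse are morphisms of families, so $n$ is an isomorphism.

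The main obstacle, and the step deserving the most care, is the claim that normalizing $C'$ at its node produces a genuine base-point-free $g^1_3$ on $C$ and, conversely, that in $\cR b^{tr}_{g,sp}$ the descended linear series on $C'$ really is a $g^1_3$ making $h'$ finite of degree $3$ — i.e. matching the combinatorial description of type-(4) fibres in Corollary~\ref{c3.3}. One must check that identifying $p_1\sim p_2$, which lie in the same fibre $D$ of $h$, turns $D$ into an effective Cartier divisor of degree $3$ on $C'$ supported partly at the node, moving in a pencil that separates the node from the third point; equivalently that the $g^1_3$ on $C$ is exactly the pullback under normalization. This is a local computation at the node together with the base-point-freeness input already invoked in the proof of Proposition~\ref{p5.1} (the footnote assuming $C$ non-hyperelliptic), and it is where the hypothesis "branch locus of $f$ disjoint from ramification locus of $h$" and the \emph{special} position of $p_1,p_2$ in a single fibre are both essential.
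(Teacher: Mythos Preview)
Your approach is correct and is exactly the one the paper takes: the paper's entire proof reads ``Normalization gives the inverse map.'' You have simply supplied the details that the paper leaves to the reader---that gluing and normalization are mutually inverse, functorial in families, and compatible with the trigonal and double-cover data---so there is no substantive difference in method.
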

\begin{proof}
Normalization gives the inverse map.
\end{proof}

\begin{lem} \label{l4.2}
Let $\widetilde C \stackrel{f}{\ra} C \stackrel{h}{\ra} \PP^1$ be an element of  $\cR b^{tr}_{g,sp}$
and $\widetilde C' \stackrel{f'}{\ra} C' \stackrel{h'}{\ra} \PP^1$ the corresponding element of 
$\cS_{g+1}^{tr}$. Normalization induces the following isomorphism of principally polarized 
abelian varieties
$$
N: P(f') \ra P(f).
$$
\end{lem}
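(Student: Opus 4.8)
The plan is to realize $N$ as the restriction of the pull‑back $\nu^{*}: J\tC'\ra J\tC$ along the normalization $\nu: \tC\ra\tC'$, to check by a diagram chase that it is an isomorphism of abelian varieties, and then to match the principal polarizations by a homological argument. Set up the geometry first. Let $\nu: \tC\ra\tC'$ and $\nu': C\ra C'$ be the normalization maps; each is an isomorphism outside the two points it collapses to a node, say $\bar q\in\tC'$ and $\bar p\in C'$. Since $q_1,q_2$ are ramification points of $f$, the covering involution $\iota$ of $f$ fixes each of them, hence descends to the covering involution $\iota'$ of $f'$; the latter fixes $\bar q$ and each of its two branches, acting as $t\mapsto -t$ in a local parameter on each branch. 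On the smooth loci $\nu$ and $\nu'$ are equivariant for $\iota$, $\iota'$ and commute with $f$, $f'$. Pull‑back along $\nu$ gives an exact sequence $0\ra\mathbb{G}_m\ra J\tC'\ra J\tC\ra 0$ with surjection $\nu^{*}$ (the node $\bar q$ is non‑separating, because $\tC\setminus\{q_1,q_2\}$ is connected), and likewise for $\nu'$; moreover $\nu^{*}$ intertwines $\iota'^{*}$ and $\iota^{*}$ and satisfies $\Nm_{f}\circ\nu^{*}=\nu'^{*}\circ\Nm_{f'}$. Hence $\nu^{*}$ maps $\ker\Nm_{f'}$ into $\ker\Nm_{f}$, and passing to identity components gives a homomorphism $N: P(f')\ra P(f)$ of abelian varieties of dimension $g$ (recall that $P(f')=(\ker\Nm_{f'})^{0}$ is a ppav of dimension $g$ by \cite{b}, while $P(f)=\ker\Nm_{f}$ since $f$ is ramified).

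Next, that $N$ is an isomorphism. Because $f$ is ramified and $f'$ is ramified at the node, the maps $f^{*}$, $f'^{*}$ are injective and $\Fix(\iota^{*})=f^{*}(JC)$, $\Fix(\iota'^{*})=f'^{*}(JC')$ are connected; a standard dimension count then yields $P(f)=\im(1-\iota^{*})$ and $P(f')=\im(1-\iota'^{*})$. Surjectivity of $N$ is immediate from $\nu^{*}\circ(1-\iota'^{*})=(1-\iota^{*})\circ\nu^{*}$ and the surjectivity of $\nu^{*}$. For injectivity, observe that $\ker N=P(f')\cap\mathbb{G}_m$ is the image of the homomorphism $(1-\iota'^{*}): G\ra\mathbb{G}_m$, where $G:=(\nu^{*})^{-1}\bigl(\Fix(\iota^{*})\bigr)$ is connected; since $\iota'$ acts trivially on the torus $\mathbb{G}_m\subset J\tC'$, this image is a connected subgroup of $\mathbb{G}_m$, hence is $\{0\}$ or all of $\mathbb{G}_m$, and the latter is impossible because $P(f')=\im(1-\iota'^{*})$ is complete. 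Thus $\ker N=0$ and $N$ is an isomorphism of abelian varieties.

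Finally, the compatibility of principal polarizations. Because $\iota'$ fixes $\bar q$ together with its branches, a dualizing differential $\eta$ on $\tC'$ with $\iota'^{*}\eta=-\eta$ has no pole at $\bar q$; hence $\nu^{*}$ identifies the anti‑invariant spaces $H^{0}(\omega_{\tC'})^{-}=H^{0}(\omega_{\tC})^{-}$ and, on integral homology, the anti‑invariant lattices as well -- the class of a loop through $\bar q$ being $\iota'$‑invariant, it contributes nothing to $H_{1}(\tC',\ZZ)^{-}$. Therefore $N$ is an isomorphism of the underlying polarized complex tori, the polarization on each side being the one determined by the intersection form on the anti‑invariant integral homology of the normalization; by the theory of Prym varieties of ramified double covers this is the principal polarization $\Xi_{f}$ on $P(f)$, and by \cite{b} it is the principal polarization $\Xi_{f'}$ on $P(f')$. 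Hence $N^{*}\Xi_{f}=\Xi_{f'}$, and $N$ is an isomorphism of ppav's.

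I expect the last step to be the main obstacle. Steps one and two are essentially bookkeeping with the two $\mathbb{G}_m$‑extensions, the decisive point being that $\iota'$ acts trivially on $\mathbb{G}_m\subset J\tC'$, which is what forces the fixed loci to match under $\nu^{*}$. Matching the polarizations, however, requires knowing that Beauville's principal polarization on the Prym of the type~(4) admissible cover $f'$ is precisely the one transported by normalization -- equivalently, that $2\Xi_{f'}$ is cut out on $P(f')$ by the theta divisor of the stable curve $\tC'$, whose pull‑back along $\nu$ is $\Theta_{\tC}$ -- and one must verify that the anti‑invariant lattices, not just their rational spans, actually coincide.
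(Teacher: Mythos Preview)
Your argument is correct and uses the same underlying structure as the paper --- the two $\CC^{*}$-extensions $0\to\CC^{*}\to J\tC'\to J\tC\to 0$ and $0\to\CC^{*}\to JC'\to JC\to 0$ together with their compatibility under the norm maps --- but the executions differ. The paper simply invokes the proof of \cite[Proposition~3.5]{b} to obtain a $3\times 3$ commutative diagram with exact rows and columns whose top row is
\[
0 \lra \ZZ/2 \lra P(f')\times\ZZ/2 \lra P(f) \lra 0,
\]
the left column being $0\to\ZZ/2\to\CC^{*}\xrightarrow{z\mapsto z^{2}}\CC^{*}\to 0$ (squaring, because $f'$ has local degree~$2$ on each branch at the node). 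The isomorphism of ppav's is then read off directly, the polarization statement being part of Beauville's package. You instead identify $P(f)=\im(1-\iota^{*})$ and $P(f')=\im(1-\iota'^{*})$, prove bijectivity of $N$ via a connectedness argument, and then match the polarizations by hand through the anti-invariant Hodge structures. Your route is more self-contained and makes explicit precisely what the paper delegates to \cite{b}; the paper's is a two-line citation. The worry you flag at the end --- that Beauville's principal polarization on $P(f')$ is the one transported from $\Xi_{f}$ by normalization, and that the anti-invariant lattices (not just their rational spans) coincide --- is exactly what \cite[Proposition~3.5]{b} establishes for admissible covers of this type, so your Hodge-theoretic verification is a valid independent check rather than a gap. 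One minor remark: the clause ``since $\iota'$ acts trivially on $\mathbb{G}_m$'' is true (because $\iota$ fixes $q_1,q_2$ individually) but is not actually needed for your injectivity argument --- connectedness of $G$ already follows from it being an extension of the connected group $f^{*}(JC)$ by $\mathbb{G}_m$.
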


\begin{proof}
According to the proof of \cite[Proposition 3.5]{b} the norm map induces the following commutative diagram with exact rows and columns 
$$
\xymatrix{
& 0 \ar[d] & 0 \ar[d] & 0 \ar[d] &\\
0 \ar[r] & \ZZ/2 \ar[r] \ar[d] & P(f') \times \ZZ/2 \ar[r] \ar[d] & P(f) \ar[r] \ar[d] & 0\\
0 \ar[r] & \CC^* \ar[r] \ar[d]_\Nm & J\widetilde C' \ar[r] \ar[d]_\Nm & J\widetilde C \ar[r] \ar[d]_\Nm & 0\\
0 \ar[r] & \CC^* \ar[r] \ar[d] & JC' \ar[r] \ar[d] & JC \ar[r] \ar[d] & 0\\
& 0 & 0 & 0 &
}
$$
This gives the assertion.
\end{proof}

Combining Corollary \ref{c3.3} with Lemma \ref{l4.1} and equation \eqref{e3.1} with Lemma 
\ref{l4.2} gives the following theorem

\begin{thm} \label{thm4.3}
The map 
$$
T \circ n:  \cR b^{tr}_{g,sp} \ra  \cM_{g,1}^{tet}
$$
is an isomorphism.

Moreover, if $\widetilde C \stackrel{f}{\ra} C \stackrel{h}{\ra} \PP^1$ is an element of  $\cR b^{tr}_{g,sp}$
and $X'$ is the corresponding (smooth) tetragonal cover, then we have an isomorphism of principally polarized abelian 
varieties
$$
P(f) \ra JX'.
$$
\end{thm}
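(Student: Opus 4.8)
The plan is to simply assemble the pieces that have already been established in the excerpt. Theorem~\ref{thm4.3} is essentially a formal consequence of three facts: Lemma~\ref{l4.1}, which identifies $\cR b^{tr}_{g,sp}$ with $\cS_{g+1}^{tr}$ via the normalization map $n$; Corollary~\ref{c3.3}, which identifies $\cS_{g+1}^{tr}$ with $\cM_{g,1}^{tet}$ via the (extended) trigonal construction $T$; and Lemma~\ref{l4.2} together with equation~\eqref{e3.1}, which control the Prym varieties. So the first step is to observe that $T\circ n$ is a composition of two isomorphisms, hence an isomorphism. This requires essentially no new argument, but I would want to check that the subsets are matched up correctly: an element of $\cR b^{tr}_{g,sp}$ has its two branch points $p_1,p_2$ of $f$ in a single fibre of $h$ (and disjoint from $R_h$), so after normalization $C'$ acquires exactly one node lying in a fibre of $h'$ that also contains a smooth point — which is exactly the defining condition of $\cS_{g+1}^{tr}$ (type (4)).

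Next I would address the statement about principally polarized abelian varieties. Let $\widetilde C \xrightarrow{f} C \xrightarrow{h}\PP^1$ be an element of $\cR b^{tr}_{g,sp}$, and let $\widetilde C' \xrightarrow{f'} C'\xrightarrow{h'}\PP^1$ be the corresponding admissible cover in $\cS_{g+1}^{tr}$. By Corollary~\ref{c3.3}, the trigonal construction gives an isomorphism of ppav
$$
\Pr(f') \xrightarrow{\;\simeq\;} J\bigl(T(h'\circ f')\bigr) = JX',
$$
where $X'$ is the (smooth) tetragonal curve associated to the cover; note that by the type-(4)/$\cM_{g,1}^{tet}$ correspondence, $X'$ is genuinely smooth, so the right-hand side is an honest Jacobian with its theta polarization. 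On the other hand, Lemma~\ref{l4.2} supplies an isomorphism of ppav
$$
N: P(f') \xrightarrow{\;\simeq\;} P(f).
$$
Composing $N^{-1}$ with the trigonal-construction isomorphism yields
$$
P(f) \xrightarrow{\;N^{-1}\;} P(f') = \Pr(f') \xrightarrow{\;\simeq\;} JX',
$$
an isomorphism of principally polarized abelian varieties, as claimed. (Here I am using that $P(f')$ and $\Pr(f')$ denote the same object — the Prym variety of the admissible double cover $f'$ — and that the polarizations agree, which is built into Lemma~\ref{l4.2}.)

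I do not expect any real obstacle in this proof, since all the substantive work has been front-loaded into Lemma~\ref{l4.1}, Lemma~\ref{l4.2}, and Corollary~\ref{c3.3}. The one point that deserves a sentence of care is the compatibility of polarizations: the isomorphism $P(f)\simeq JX'$ must respect the principal polarizations $\Xi_f$ and the theta divisor of $X'$, and this follows because each of the two constituent isomorphisms ($N$ from Lemma~\ref{l4.2}, and the trigonal-construction isomorphism from Corollary~\ref{c3.3}/equation~\eqref{e3.1}) is asserted to be an isomorphism of ppav, and a composition of such is again one. A second minor point to verify is that the base point $p$ one should remove to form $X'$ does not interfere with smoothness of $X'$; but this is exactly what the type-(4) analysis in Donagi's correspondence guarantees, so nothing new is needed. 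Thus the proof is a short formal concatenation, and I would write it in two or three lines invoking the cited results.
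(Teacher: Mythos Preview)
Your proposal is correct and follows essentially the same approach as the paper: the paper's proof is a single sentence stating that combining Corollary~\ref{c3.3} with Lemma~\ref{l4.1}, and equation~\eqref{e3.1} with Lemma~\ref{l4.2}, gives the theorem. Your write-up simply unpacks this composition with a few extra sanity checks, but the substance is identical.
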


\begin{rem} \label{rem4.4}
\begin{enumerate}
\item[(i)] 
In \cite{d}, Donagi outlines also explicitely the inverse map $T^{-1}$. Since the inverse of the map 
$n$ is clear, this gives an explicit version of the inverse map of Theorem \ref{thm4.3}. 
\item[(ii)]
One can also express $X'$ as the Prym variety of an \'etale double cover of $X'$. In fact, the double 
cover $Y' \ra X'$ which occurred in Corollary  \ref{c3.3} is trivial. Hence $JX' = P(Y'/X')$. 
\end{enumerate}
\end{rem}

\vspace{1cm}

Finally, we shall describe the covering $Y \ra X$ with $X$ and $Y$ defined as in Section 2. As a consequence, it shows that $P(Y/X)$
is an abelian variety of the right  dimension.

Let $D := p_1 + p_2 +r$ be the unique divisor in $g^1_3$ containing the 2 branch points of $f$.
Denote by
$$
\widetilde k:= k \circ \pi:  Y \ra \PP^1
$$
the $8:1$ covering of diagram \eqref{d2.2}.
If $r_1$ and $r_2 = \iota(r_1)$ are the 2 points of $f^{-1}(r)$, then 
$$
D_1 := q_1 + q_2 +r_1  \in Y  \quad \mbox{and} \quad D_2 := q_1 + q_2 +r_2 \in Y
$$
are exactly the 2 points of $\widetilde k^{-1}(h(p_1))$.

\begin{lem}
The curve $Y$ consists of $2$  smooth irreducible components
$$
Y = Y_1 \cup Y_2
$$
with 
$$
Y_2 = \iota(Y_1), \quad \quad Y_1 \cap Y_2 = \{D_1,D_2\} \quad \mbox{and} \quad \iota(D_1) = D_2.
$$
\end{lem}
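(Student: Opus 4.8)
The plan is to analyze the $8:1$ cover $\widetilde k : Y \to \PP^1$ near the fibre over $h(p_1)$ and use the orientation cover together with monodromy to detect the two components. First I would recall from Section~2 that $Y$ sits inside $\widetilde C^{(3)}$ as the preimage of the $g^1_3 \simeq \PP^1$, and that by the construction of the orientation cover $O \to \PP^1$ there is a factorization $\psi : Y \to O$ of degree $4$ with $O$ a \emph{connected} double cover of $\PP^1$ (Lemma~\ref{l2.2}). In the special case the branch locus of $f$ lies in a single fibre $D = p_1 + p_2 + r$; over $h(p_1)$ the sections of $f$ are rigid at $p_1$ and $p_2$ (only $q_1$, resp. $q_2$, is available) and have two choices at $r$, so the fibre $\widetilde k^{-1}(h(p_1))$ consists of exactly the two points $D_1 = q_1+q_2+r_1$ and $D_2 = q_1+q_2+r_2$, as stated just before the lemma. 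Since $D_1$ and $D_2$ differ by a single change $r_1 \mapsto \iota(r_1) = r_2$, i.e. by an \emph{odd} number of changes, they lie over the \emph{two distinct} points of the branch fibre of $O \to \PP^1$; in particular $\psi(D_1) \neq \psi(D_2)$.

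Next I would produce the decomposition. Because $h \circ f$ is special, the ramification analysis in the proof of Proposition~\ref{genus} breaks down exactly over $h(p_1) = h(p_2)$: away from that point $\widetilde k$ is an honest $8:1$ cover with the even monodromy coming from the \'etale part, and one checks (as in \cite{w}) that $Y \setminus \widetilde k^{-1}(h(p_1))$ has two connected components swapped by $\iota$, each mapping isomorphically under $\psi$ onto the corresponding component of $O \setminus \{\text{branch points}\}$; the two are distinguished precisely by the parity/orientation, i.e. by which of the two points of $O$ they lie over. Let $Y_1$ and $Y_2$ be the closures of these two components in $Y$. By Proposition~\ref{conn} the whole curve $Y$ is connected and by Proposition~\ref{p5.1}(2) it is smooth except for $2$ nodes, which are forced to be the two points $D_1, D_2$ where the two sheets-sets come together; hence $Y_1, Y_2$ are smooth, $Y_1 \cap Y_2 = \{D_1, D_2\}$, $Y = Y_1 \cup Y_2$, and $\iota$ interchanges the two components with $\iota(D_1) = D_2$ since $\iota$ swaps the two points $r_1, r_2$ over $r$ while fixing $q_1+q_2$.

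It remains to see that $Y_1$ (hence $Y_2 = \iota(Y_1)$) is \emph{irreducible}, which I expect to be the main obstacle: connectedness of each half is exactly the statement that the \'etale monodromy of $\widetilde k$ on the sections of fixed orientation acts transitively, and this is where one must reuse Welters' argument for the $d=3$ \'etale case together with the fact that $O$ is connected. Concretely, $\psi|_{Y_1}: Y_1 \to O$ is a degree-$4$ cover whose monodromy group is the full alternating-type group arising in the \'etale trigonal construction, and Welters' computation (\cite[p.\ 107]{w}) shows this monodromy is transitive; since $O$ is itself connected (Lemma~\ref{l2.2}), $Y_1$ is connected, and being smooth it is irreducible. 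Finally a short Euler-characteristic bookkeeping — using $g_Y = 2g+1$ from the general case as a sanity check, or directly from the Hurwitz data of $\psi|_{Y_i}$ — confirms that each $Y_i$ has genus $g$, matching the remark at the end of Section~2 that $Y$ is a union of two genus-$g$ curves meeting in two points; this last genus count is not needed for the statement of the lemma but I would include it for completeness.
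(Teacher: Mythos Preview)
Your overall strategy --- detect the two components via the orientation cover $O$ and then bound the number of components using the \'etale (Welters/Donagi) picture --- is the right one, and it is exactly what the paper does. But your description of $O$ in the special case is wrong, and this propagates into several incorrect intermediate claims. In the special case $h(p_1)=h(p_2)$, so by Lemma~\ref{l2.2} the double cover $O\to\PP^1$ has a \emph{single} branch point. Since $\PP^1$ is simply connected, this forces $O$ to be \emph{reducible}: $O=O_1\cup O_2$ with $O_i\simeq\PP^1$ meeting in the unique point over $h(p_1)$. In particular the fibre of $O$ over $h(p_1)$ is one point, so $\psi(D_1)=\psi(D_2)$, contrary to your claim. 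The paper uses this reducibility of $O$ directly: $Y=\psi^{-1}(O_1)\cup\psi^{-1}(O_2)$ already gives at least two components, and then the comparison with Donagi's cover $Y'\to X'$ of Remark~\ref{rem4.4} (which has exactly two components) shows there are no more.

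Two further slips stem from the same misconception. First, the open pieces of $Y$ do not map ``isomorphically'' under $\psi$ onto pieces of $O$; $\psi$ has degree $4$. Second, your irreducibility argument for $Y_1$ does not work as written: $\psi|_{Y_1}$ lands in $O_1\simeq\PP^1$, not in all of $O$, so ``$O$ connected $\Rightarrow$ $Y_1$ connected'' is a non sequitur. What one actually needs is that the $4:1$ cover $Y_1\to O_1$ has transitive monodromy, and this is precisely the content of the \'etale trigonal construction (Welters/Donagi), invoked in the paper via the identification of $\widetilde k^{-1}(\PP^1\setminus h(p_1))$ with an open dense part of the trivial cover $Y'\to X'$. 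Once you correct the picture of $O$, your argument and the paper's coincide.
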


\begin{proof}
According to Lemma \ref{l2.2} the cover $\omega: O \ra \PP^1$ is branched exactly at $h(p_1) =
h(p_2)$. Since $\PP^1$ is simply connected, this implies that $O$ consists of 2 components
$O = O_1 \cup O_2$ with $O_i \simeq \PP^1$ intersecting in the point lying over $h(p_i)$.

It follows that $Y = \psi^{-1}(O)$ consists of at least 2 irreducible components. But it cannot consist 
of more than 2 components, since the open dense part $\widetilde k^{-1}(\PP^1 \setminus h(p_1))$
is isomorphic to an open dense part of the cover $Y' \ra X'$ of Remark \ref{rem4.4}, which consists of exactly 2 components $Y_1$  and $Y_2$.

Clearly $Y_1$ and $Y_2$ are smooth with $Y_1 \cap Y_2 = \{D_1,D_2\}$ and $\iota(Y_1) = Y_2$. 
\end{proof}

\begin{cor}
The double cover
$$
Y \ra X:= Y/\iota
$$
is a Wirtinger cover in the sense of \cite[Example 1.9,(I)]{d}. In particular its Prym variety $P(Y/X)$
is an abelian variety.
\end{cor}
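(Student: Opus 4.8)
The plan is to recognize the triple $(Y,X,\iota)$ as an instance of Donagi's Wirtinger configuration \cite[Example 1.9,(I)]{d}, and then to conclude that its Prym variety is abelian by the brief computation underlying that example. First I would use the preceding Lemma to describe $X=Y/\iota$ explicitly: it records $Y=Y_1\cup Y_2$ with $Y_1,Y_2$ smooth irreducible, $Y_2=\iota(Y_1)$, $Y_1\cap Y_2=\{D_1,D_2\}$ — the two nodes of $Y$ by Proposition \ref{p5.1} — and $\iota(D_1)=D_2$. Since $\iota$ is fixed-point free (Lemma \ref{fixedfree}) and identifies $Y_1$ with $Y_2$, the composite $Y_1\hra Y\ra X$ is injective away from $\{D_1,D_2\}$ and glues $D_1$ to $D_2$; tracking $\iota$ on the two branches at each node (it carries the $Y_1$-branch at $D_1$ to the $Y_2$-branch at $D_2$, and the $Y_2$-branch at $D_1$ to the $Y_1$-branch at $D_2$) shows that $X$ has a single node $\bar D$, that $Y_1\ra X$ is its normalization, and that the preimage of $\bar D$ is $\{D_1,D_2\}$. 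Away from $\bar D$ the cover $Y\ra X$ is \'etale, with the two nodes of $Y$ lying over $\bar D$; hence $Y$ is obtained from two copies of $Y_1$ glued crosswise along $D_1,D_2$, carrying the evident copy-exchange involution. This is precisely a Wirtinger cover.

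Next I would deduce that $P(Y/X)$ is an abelian variety, which is the content of \cite[Example 1.9,(I)]{d}; concretely, $JY$ fits in an exact sequence $0\ra\CC^*\ra JY\ra JY_1\times JY_2\ra 0$, the torus factor coming from the unique loop in the dual graph of $Y$ (two vertices, two edges). Because $\iota$ exchanges both vertices and both edges, it fixes that loop, hence acts trivially on $\CC^*$, while on $JY_1\times JY_2\cong JY_1\times JY_1$ it acts by the exchange. Thus the connected component of the $(-1)$-eigenspace of $\iota^*$ on $JY$, namely $P(Y/X)$, maps onto the antidiagonal of $JY_1\times JY_1$ with kernel contained in $\{\pm 1\}$; in particular it is an abelian variety, and in fact (loc. cit.) canonically isomorphic to $JY_1$.

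The one point that requires care is that the gluing producing $Y$ is crosswise rather than straight, equivalently that $\iota$ exchanges the two nodes of $Y$ instead of fixing them — in the ``straight'' alternative $\iota$ would act by $-1$ on the torus factor and $P(Y/X)$ would fail to be proper. But this is already guaranteed by the preceding Lemma, which records $\iota(D_1)=D_2$ together with $\iota(Y_1)=Y_2$, so the verification amounts to a short branch-bookkeeping argument plus the citation.
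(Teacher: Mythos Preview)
Your proposal is correct and uses exactly the same two ingredients as the paper's own proof: that $\iota$ is fixed-point free (Lemma~\ref{fixedfree}) and that $\iota(D_1)=D_2$ (the preceding Lemma). The paper's proof is a one-line invocation of these facts, while you have unpacked the branch bookkeeping that identifies $X$ as the one-nodal curve $Y_1/(D_1\sim D_2)$ and $Y$ as the crosswise gluing of two copies of $Y_1$, and you have also spelled out the generalized-Jacobian argument behind \cite[Example~1.9,(I)]{d}; this extra detail is sound but not logically different from what the paper does by citation.
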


\begin{proof}
This follows from the fact that $\iota:Y \ra Y$ is fixed-point free by Lemma \ref{fixedfree}
and $\iota(D_1)=D_2$.
\end{proof}

\begin{lem} We have
$$
p_a(Y) = 2g+1 , \qquad p_a(X) = g+1.
$$
and hence $\dim P(Y/X) = p_a(Y) - p_a(X) = g$.
\end{lem}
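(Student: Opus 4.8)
The plan is to read off both arithmetic genera from the decomposition $Y = Y_1 \cup Y_2$ furnished by the previous lemma, and then to invoke the dimension formula for the Prym of a Wirtinger cover.

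For $p_a(Y)$ I would argue as follows. By the previous lemma $Y$ is a connected nodal curve whose two smooth components $Y_1$, $Y_2$ meet transversally exactly in $D_1, D_2$, so $p_a(Y) = g_{Y_1} + g_{Y_2} + 1$. Since $Y_2 = \iota(Y_1)$ and $\iota \in \Aut(Y)$, the restriction $\iota\colon Y_1 \to Y_2$ is an isomorphism, hence $g_{Y_1} = g_{Y_2}$ and $p_a(Y) = 2g_{Y_1}+1$; it then suffices to show $g_{Y_1} = g$. For this I would use, as in the proof of the previous lemma, that $\widetilde k^{-1}(\PP^1 \setminus \{h(p_1)\})$ is isomorphic to a dense open subset of the double cover $Y' \to X'$ of Remark \ref{rem4.4}. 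By that remark $Y' \to X'$ is the trivial cover $X' \sqcup X' \to X'$, and by Corollary \ref{c3.3} the curve $X'$ is a smooth tetragonal curve of genus $g$. Thus $Y_1$ contains a dense open subset isomorphic to a dense open subset of $X'$; as both are smooth complete curves, $Y_1 \cong X'$ and $g_{Y_1} = g$, giving $p_a(Y) = 2g+1$. (Alternatively $g_{Y_1}$ can be computed from Hurwitz applied to the degree-$4$ map $Y_1 \to \PP^1$ together with Donagi's list of fibre types, but the identification with $X'$ is shorter.)

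For $p_a(X)$ I would use $X = Y/\iota$. Because $\iota$ is fixed-point free (Lemma \ref{fixedfree}) and interchanges $Y_1$ and $Y_2$, a general fibre of $\pi\colon Y \to X$ meets $Y_1$ in a single point, so $\pi|_{Y_1}\colon Y_1 \to X$ is birational; it is injective away from $\{D_1, D_2\}$ and, since $\iota(D_1) = D_2$, sends $D_1$ and $D_2$ to the same point. A local computation at that point — $Y$ has four branches there, two at $D_1$ and two at $D_2$, and the free $\ZZ/2$-action matches a branch at $D_1$ with a branch at $D_2$ — shows the image is an ordinary node. Hence $X$ is irreducible with exactly one node and normalization $Y_1$, so $p_a(X) = g_{Y_1} + 1 = g+1$.

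Finally, $Y \to X$ is a Wirtinger cover by the corollary above, so by \cite{b} its Prym is an abelian variety of dimension $p_a(Y) - p_a(X) = (2g+1)-(g+1) = g$. The step I expect to require the most care is the identification $g_{Y_1} = g$ (equivalently $Y_1 \cong X'$) together with the local verification that the image of $\{D_1, D_2\}$ is a node; the remaining manipulations are routine arithmetic-genus bookkeeping for nodal curves.
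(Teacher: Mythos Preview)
Your proof is correct and shares the same overall skeleton as the paper: both reduce to $p_a(Y)=2g(Y_1)+1$ via the decomposition $Y=Y_1\cup Y_2$, both observe that $X$ has a single node with normalization $Y_1$ so that $p_a(X)=g(Y_1)+1$, and both then compute $g(Y_1)=g$. The genuine difference is in that last step. The paper computes $g(Y_1)$ directly by applying Hurwitz to the degree-$4$ map $\widetilde k|_{Y_1}:Y_1\to\PP^1$: over a branch point of $h$ of index $1$ or $2$ the fibre type is $(2,1,1)$ or $(3,1)$, and over $h(p_1)=h(p_2)$ it is $(2,2)$, giving $|R_{\widetilde k|_{Y_1}}|=(2g+4)+2=2g+6$ and hence $g(Y_1)=g$. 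You instead exploit the birational identification $Y_1\cong X'$ coming from the open-part isomorphism in the previous lemma together with Remark~\ref{rem4.4}(ii), and read off $g(Y_1)=g(X')=g$ from Corollary~\ref{c3.3}. Your route is shorter and conceptually pleasant (it explains \emph{why} the genus is $g$, namely because $Y_1$ \emph{is} the tetragonal curve produced by Donagi's extended trigonal construction), but it leans on those earlier statements; the paper's Hurwitz computation is more self-contained and also pins down the precise fibre types of $Y_1\to\PP^1$, which is information one does not immediately get from the birational identification.
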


\begin{proof}
For $i = 1$ and 2 the map $\psi_i:= \psi|Y_i: Y_i \ra O_i = \PP^1$ is a 4-fold cover, where we
identify $O_i$ with $ \PP^1$ via the map $\omega$. The  map $\psi_i$ can be considered as the map
$\widetilde k_i := \widetilde k|Y_i: Y_i \ra \PP^1$.

We then have 
$$
p_a(Y) = 2g(Y_i) +1
$$
and it suffices to show that $g(Y_i) = g$. Then also $p_a(X) = g+1$, since $X$ admits exactly 1 node and $Y_i$ is the normalization of $X$.

There are 2 types of ramification of $\widetilde k_i: Y_i \ra \PP^1$ for a branch point $a$ of 
$\widetilde k_i$. Either $a$ is a 
branch point of $h$ of ramification index $i = 1$ or 2. Then $a$ is a branch point for $\widetilde k_i$ 
with ramification of type $(2,1,1)$ if $i =1$ and $(3,1)$ if $i =2$.  

Or $a = h(p_1) = h(p_2).$ Then  $a$ is a branch point for $\widetilde k_i$ with ramification of type $(2,2)$, i.e. $\widetilde k_i^{-1}(a) = 
\{2 \widetilde D_1, 2 \widetilde D_2\}$.
 
Since $|R_h| = 2g+4$ and $p_1$ and $p_2$ are disjoint from $R_h$, this gives for the ramification divisor $R_{\widetilde k_i}$ 
of $\widetilde k_i$: 
$$
|R_{\widetilde k_i}| = 2g+4 + 2  = 2g + 6.
$$
and the Hurwitz formula gives $g(Y_i) = g.$
\end{proof}

\section{The main theorem for general covers}

Let $C$ be a smooth trigonal curve of genus $g$ with trigonal cover $h: C \ra  \PP^1$ and let 
$f: \widetilde C \ra C$ be a general ramified double cover branched over 2 points $p_1,p_2 \in C$,
i.e. $p_1$ and $p_2$ do not lie in a fibre of $h$ and are disjoint from the ramification locus of $h$.
Let $Y$ be the curve defined by diagram \eqref{d2.1}. According to Propositions \ref{conn}, \ref{p5.1} and  \ref{genus}, 
$Y$ is smooth and irreducible of genus $2g+1$.
According to Proposition \ref{fixedfree} it admits a fixed-point free involution $\iota$. As above let
$\pi: Y \ra X$ denote the corresponding \'etale double cover with diagram \eqref{d2.2}. 
Let $(P(\pi),\Xi_\pi)$ denote the corresponding principally polarized Prym variety of dimension $g$.
Let $\cR b^{tr}_{g,gen}$ and $\cR_{g+1}^{tet}$ the moduli spaces as defined in the introduction.

\begin{thm} \label{thm5.1}
 There is a canonical map 
$$
\cR b^{tr}_{g,gen} \ra  \cR_{g+1}^{tet}.
$$
If $\widetilde C \stackrel{f}{\ra} C \stackrel{h}{\ra} \PP^1$ is an element of  $\cR b^{tr}_{g,gen}$
and $\pi: Y \ra X$ the corresponding \'etale double cover, then the principally polarized abelian varieties
$(P(f), \Xi_f)$ and $(P(\pi), \Xi_\pi)$ are canonically isomorphic.

\end{thm}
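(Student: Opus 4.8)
The plan is to reduce the general case to the special case, which is already settled by Theorem \ref{thm4.3} and the analysis of Section 4, by a degeneration argument together with a direct comparison of the two Prym varieties for a fixed general cover. First I would construct the canonical map $\cR b^{tr}_{g,gen} \ra \cR_{g+1}^{tet}$: by Propositions \ref{conn}, \ref{p5.1} and \ref{genus}, to every general element $\widetilde C \stackrel{f}{\ra} C \stackrel{h}{\ra} \PP^1$ the diagram \eqref{d2.1}--\eqref{d2.2} associates a smooth irreducible curve $Y$ of genus $2g+1$ with a fixed-point free involution $\iota$, hence an \'etale double cover $\pi: Y \ra X$ with $X = Y/\iota$ tetragonal via $k$; one checks $X$ has genus $g+1$ (Hurwitz, using $p_a(Y)=2g+1$ and that $\pi$ is \'etale so $2(g+1)-2 = 2(2g+2-2)-0$ wait — rather $p_a(X)-1 = \frac12(p_a(Y)-1) = g$, so $g(X)=g+1$). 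This assignment is evidently functorial in families, giving the map.

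The heart of the matter is the isometry $(P(f),\Xi_f) \cong (P(\pi),\Xi_\pi)$. My approach would be: (i) extend the construction over the boundary. The special locus $\cR b^{tr}_{g,sp}$ sits in the closure of $\cR b^{tr}_{g,gen}$ inside a suitable partial compactification of $\cR b^{tr}_g$; as a general cover degenerates so that $p_1,p_2$ collide into one fibre of $h$, the curve $Y$ degenerates (by Proposition \ref{p5.1}(2)) to the nodal curve $Y_1 \cup Y_2$ of Section 4, i.e. exactly to the Wirtinger cover attached to the special element. Both $P(f)$ and $P(\pi)$ extend to families of ppav over the base (for $P(f)$ because $f$ stays a ramified double cover with its two branch points; for $P(\pi)$ because $\pi$ stays \'etale, $Y$ acquires only the two nodes exchanged by $\iota$, so the Prym stays a ppav of dimension $g$ — a Wirtinger situation). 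On the special locus the two limits are canonically isomorphic by Theorem \ref{thm4.3} together with Remark \ref{rem4.4}(ii) (which identifies $JX' = P(Y'/X')$, and $Y'/X'$ is precisely the boundary limit of $\pi$). (ii) Upgrade this boundary agreement to an agreement on the whole of $\cR b^{tr}_{g,gen}$: since $\cR b^{tr}_{g,gen}$ is irreducible (being fibred over an irreducible space of trigonal curves with irreducible fibres parametrising $(\eta\text{-type data}, p_1, p_2)$) and the special locus is a divisor in its closure, any two maps to the moduli stack $\cA_g$ of ppav that agree on that divisor and are both everywhere defined must agree — more carefully, one forms the difference as a section of an appropriate $\Hom$-scheme / isomorphism torsor over $\cR b^{tr}_{g,gen}$ and uses that it extends over the boundary where it is the identity, hence is the identity generically, hence everywhere by separatedness.

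An alternative, more hands-on route that avoids degeneration entirely is to exhibit the isomorphism directly from a correspondence between $\widetilde C$ and $Y$, mimicking the classical trigonal construction: a point of $Y$ over $a\in\PP^1$ is a section $s$ of $f$ over $h^{-1}(a)$, and sending $s$ to the (partial) divisor it picks out on $\widetilde C$, together with the incidence with the fibres of $h\circ f$, gives a correspondence inducing maps $J\widetilde C \ra JY$ and back. One then traces through which abelian subvarieties correspond: the $(-1)$-eigenspace for $\iota$ on $JY$ should match $P(f)$, and polarizations are compared via the standard computation that the correspondence multiplies the principal polarizations by the degree in a way that becomes an isometry after restriction to the Prym parts. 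This is exactly how \eqref{e3.1} is proven in the \'etale case, and Donagi's machinery ([d], Lemma 2.1 and Section 2) is set up to handle the ramified fibres of type $(2,2,2,2)$ as well.

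The main obstacle I expect is (i): making precise the partial compactification of $\cR b^{tr}_g$ in which $\cR b^{tr}_{g,sp}$ is a Cartier divisor in $\overline{\cR b^{tr}_{g,gen}}$, and verifying that \emph{both} families of ppav — the one from $P(f)$ and the one from $P(\pi)$ — extend to the boundary with the expected Wirtinger limit, with no monodromy or unramifiedness issue spoiling the extension. The bookkeeping of vanishing cycles for the degeneration of $Y$ (two nodes, swapped by $\iota$) and the compatibility of $\Xi_f$, $\Xi_\pi$ with these degenerations is where the real work lies; once that is in place, the irreducibility argument and the appeal to Theorem \ref{thm4.3} close the proof, and Proposition \ref{p5.2} pins down the image inside $\cR_{g+1,2}^{tet}$.
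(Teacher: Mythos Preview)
Your primary route via degeneration has a genuine gap at exactly the point you flag in (ii). Agreement of two maps to $\cA_g$ on a boundary divisor of an irreducible base does \emph{not} force agreement on the interior: the locus where two morphisms to a separated target coincide is closed, not open, so you would need agreement on a dense subset, not on a codimension-one stratum. Your attempted repair via an Isom-torsor fails for the same reason: $\mathrm{Isom}_B(P(f),P(\pi)) \to B$ is finite and unramified, and a section over a divisor $D \subset \overline B$ has no mechanism to propagate into the open complement (a nontrivial finite \'etale cover can very well trivialise along a divisor without being globally trivial). There is a further subtlety you elide: on the special locus the Section~2 cover $Y \to X$ is the Wirtinger cover of Section~4, whereas Remark~\ref{rem4.4}(ii) refers to the \emph{disconnected} cover $Y' \to X'$ coming from Donagi's construction on the nodal model $(\widetilde C',C')$; matching these two limits correctly is additional work, and even once done it only gives you the boundary statement, not the extension.

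The paper does not argue by degeneration at all. It constructs the isomorphism for a fixed general cover directly: the inclusion $Y \hookrightarrow \widetilde C^{(3)}$ followed by the Abel--Jacobi map $\widetilde\alpha: \widetilde C^{(3)} \to J\widetilde C$ lands in $\ker \Nm_f = P(f)$ (because the $g_3^1$ collapses to a point in $JC$), and after a translation the resulting map $\varphi: Y \to P(f)$ is anti-equivariant for $\iota$, hence factors through the Abel--Prym map $\psi: Y \to P(\pi)$ to give a homomorphism $\overline\varphi: P(\pi) \to P(f)$. The whole content is then the cohomological identity
\[
\varphi_*[Y] \;=\; \frac{2}{(g-1)!}\,\wedge^{g-1}[\Xi_f] \quad \text{in } H^{2g-2}(P(f),\ZZ),
\]
which by Welters' criterion forces $\overline\varphi$ to be an isomorphism of ppav. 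This identity is obtained by pulling back Macdonald's formula for $[g_3^1]$ in $H^4(C^{(3)},\ZZ)$ along $f^{(3)}$, pushing forward via Poincar\'e's formula, and using the decomposition $2[\widetilde\Theta] = \Nm_f^*[\Theta] + \widehat\iota_{P(f)}^*[\Xi_f]$; the binomial sums collapse to give the constant $8/(g-1)!$, and dividing by $\deg(2_{P(f)}|_{\text{curve}}) = 4$ yields the claim. Your ``alternative, more hands-on route'' is pointing in this direction, but the substance is precisely this Macdonald--Poincar\'e--Welters computation, not a generic correspondence argument.
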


\begin{proof} {\bf Step 1}: {\it The set up.}

Choose a point $y_0 \in Y$ and let $\widetilde \alpha = \widetilde  \alpha_{y_0}: \widetilde C^{(3)}\ra J\tC$ and 
$\alpha = \alpha_{f(y_0)}: C^{(3)} \ra JC$ be the corresponding Abel-Jacobi maps. Then the following 
diagram 
is commutative
\begin{equation} \label{d6.1}
\xymatrix{
Y \ar@{^{(}->}[r] & \widetilde C^{(3)} \ar[r]^{\widetilde \alpha} \ar[d]_{f^{(3)}} & J\widetilde C \ar[d]^{\Nm_f}\\
& C^{(3)} \ar[r]^\alpha & JC
}
\end{equation}
This shows that 
$$
\widetilde \alpha(Y) \subset \ker \Nm_f = P(f),
$$
since $\alpha$ maps $f^{(3)}(Y) = g_3^1 = \PP^1$ to 0 in $JC$. 
Denote by $\varphi$ the restriction $\widetilde \alpha|Y$ as a map into $P(f)$,
$$
\varphi := \widetilde \alpha|Y: Y \ra P(f).
$$
For any 
$y = \sum_{i=1}^3 \widetilde c_i \in Y$ we have
\begin{eqnarray*}
(\varphi + \varphi \iota)(y)& = &  \cO_Y\left( y
+ \iota (y) -2y_0  \right)\\
& =& \cO_Y\left( \sum_{i=1}^3 \widetilde c_i 
+ \sum_{i=1}^3 \iota(\widetilde c_i) -2y_0  \right) = \cO_Y(f^*g_3^1 -2y_0).\\
\end{eqnarray*}
But this is a constant in $P(f)$. So replacing $\alpha$ by a suitable translate we may assume that we have
$$
\varphi \iota =  - \varphi.
$$
According to the universal property for Prym varieties (valid also for ramified double covers, see 
\cite[12.5.1]{bl}) the map $\varphi$ factorizes via the Abel-Prym map $\psi: Y \ra P(\pi)$, i.e. the following diagram is commutative
$$
\xymatrix{
Y \ar[r]^\varphi \ar[d]_\psi & P(f) \ar[d]^{t_{-\varphi(y_0)}}\\
P(\pi) \ar[r]^{\overline \varphi} & P(f)
}
$$
where $\overline \varphi$ denotes the extension of $\varphi$ and the right vertical map the translation by $-\varphi(y_0)$.\\

{\bf Step 2}: {\it We claim that for the proof of Theorem \ref{thm5.1} it suffices to show that}
\begin{equation} \label{e6.1}
\varphi_*[Y] = \frac{2}{(g-1)!} \wedge^{g-1}[\Xi_f] \quad \mbox{in} \quad H^{2g-2}(P(f),\ZZ).
\end{equation}

\begin{proof}
Note first that $\varphi(Y)$ generates $P(f)$ as an abelian variety. This implies that 
$\overline \varphi$ is an isogeny, $P(\pi)$ and $P(f)$ being of the same dimension. According to 
\cite[Welters' criterion 12.2.2]{bl} we have 
$$
\psi_*[Y] = \frac{2}{(g-1)!} \wedge^{g-1}[\Xi_\pi] \quad  \mbox{in} \quad  H^{2g-2}(P(\pi),\ZZ).
$$
Since we assume \eqref{e6.1}, the commutativity of the diagram gives 
$$
\wedge^{g-1} [\Xi_\pi] = \wedge^{g-1} [\Xi_f].
$$ 
But then \cite[Lemma 12.2.3]{bl} implies that $\overline \varphi$ is an isomorphism of polarized 
abelian varieties.
\end{proof}

{\bf Step 3}: {\it Let $i_{P(f)}: P(f) \hra J \widetilde C$ denote the canonical embedding.
Then}
$$
\widehat i_{P(f)*} \widetilde \alpha_* [Y] = \frac{8}{(g-1)!} \wedge^{g-1} [\Xi_f].
$$

The proof applies the formula of Macdonald for the class of the curve
$g_3^1$ in the variety $C^{(3)}$ in $H^4(C^{(3)},\ZZ)$ (see \cite[Lemma 8.3.2]{acgh}): 
$$
[g_3^1] = \sum_{k=0}^2 {2-g \choose k} \eta^k \cdot \frac{\wedge^{2-k}[\alpha^*\Theta]}{(2-k)!} \quad \mbox{in} 
\quad H^4(C^{(3)},\ZZ)
$$
where $\eta^k \in H^{2k}(C^{(3)},\ZZ)$ denotes the fundamental class of $C^{(3-k)}$ 
in $C^{(3)}$ under the 
embedding $\sum_{k=1}^{3-k} p_i \mapsto \sum_{k=1}^{3-k} p_i+ kp$ with some fixed point $p \in C$.
Similarly we define the classes $\widetilde \eta^k \in H^{2k}(\widetilde C^{(3)},\ZZ)$.
It is well known that $\eta^k = \wedge^k \eta^1$ and $\widetilde \eta^k = \wedge^k \widetilde \eta^1$ and both are related by
$$
{f^{(3)}}^* \eta^k = 2^k \widetilde \eta^k.
$$
Applying this to Macdolnald's formula, we get with the commutativity of diagram \eqref{d6.1}
\begin{equation} \label{e6.3}
[Y] = {f^{(3)}}^*[g_3^1] = \sum_{k=0}^2 {2-g \choose k}2^k \widetilde \eta^k \cdot \frac{\widetilde \alpha^* 
\Nm_f^*\wedge^{2-k}[\Theta]}{(2-k)!} \quad \mbox{in} \quad H^4(\widetilde C^{(3)},\ZZ).
\end{equation}
Now Poincar\'e's formula \cite[11.2.1]{bl} in this case says
$$
\widetilde \alpha_* \widetilde \eta^k = \frac{\wedge^{2g+k-3} [\widetilde \Theta]}{(2g+k-3)!}.
$$
Applying $\widetilde \alpha_*$ to \eqref{e6.3}, Poincar\'e's formula and the projection formula give
\begin{equation} \label{e6.4}
\widetilde \alpha_*[Y] = \sum_{k=0}^2 {2-g \choose k} 2^k \frac{\wedge^{2g+k-3} [\widetilde \Theta]}{(2g+k-3)!} \cdot  
\frac{ \Nm_f^*\wedge^{2-k}[\Theta]}{(2-k)!} 
\quad \mbox{in} \quad H^{4g-2}(J\widetilde C,\ZZ).
\end{equation} 
According to \cite[Proposition 12.3.4]{bl} we have
$$
2 [\widetilde \Theta] = \Nm_f^*[\Theta] + \widehat \iota_{P(f)}^*[\Xi_f].
$$
 Applying this and the binomial formula to \eqref{e6.4}, we get
\begin{equation} \label{e6.5}
\widetilde \alpha_* [Y] = 2^{3-2g} \sum_{k=0}^2 {2-g \choose k} \sum_{j=0}^{2g+k-3}
{2g+k-3 \choose j} \frac{\Nm_f^* \wedge^{j+2-k}[\Theta]}{(2-k)!} \cdot \frac{\widehat \iota_{P(f)}^* \wedge^{2g+k-3-j}[\Xi_f]}{(2g+k-3)!}
\end{equation}
Now we claim that
$$
\widehat \iota_{P(f)_*} \Nm_f^* \bigwedge^n \frac{[\Theta]}{n!} = \left\{ \begin{array}{ccl} 
                                                                                              2^{2g} & if & n= g,\\
                                                                                               0   &if& 0\leq n \leq g-1.
                                                                                              \end{array}  \right.
$$
For this consider the composed map
$$
\widehat  \iota_{P(f)_*} \circ \Nm_f^*: H^{2n}(JC,\ZZ) \ra H^{2n}(J \widetilde C,\ZZ) \ra H^{2n-2g}(P(f),\ZZ).
$$
By degree reasons, $\widehat  \iota_{P(f)_*} \Nm_f^* \equiv 0$ for $n \neq g$. For $n=g$ note that
$\widehat \iota_{P(f)}$ is multiplication by 2 on $P(f)$. So we get, denoting by $[0]$ the class of the point 0 in $JC$,
$$
\widehat  \iota_{P(f)*} \Nm_f^* \frac{\wedge^{g}[\Theta]}{g!} = 
\widehat  \iota_{P(f)*} \Nm_f^* [0] = \widehat  \iota_{P(f)*} [P(f)] = \deg (2_{P(f)}) = 2^{2g}.
$$
Inserting this into equation \eqref{e6.5} we get
\begin{eqnarray*}
\widetilde \iota_{P(f)*} \widetilde \alpha_* [Y] & =& 2^{3-2g} \sum_{k=0}^2 {2-g \choose k} {2g+k-3 \choose g+k -2}
\frac{2^{2g} g!}{(2-k)!} \cdot \frac{\wedge^{g-1} [\Xi_f]}{(2g+k-3)!}\\
&=& \frac{2^{3}}{(g-1)!} \wedge^{g-1} [\Xi_f] \sum_{k=0}^2   {2-g \choose k} 
{g \choose 2-k}\\
&=& \frac{2^{3}}{(g-1)!} \wedge^{g-1} [\Xi_f],
\end{eqnarray*}
since $\sum_{k=0}^2   {2-g \choose k} {g \choose 2-k}= \frac{1}{2}g(g-1) + (2-g)g + \frac{1}{2} (2-g)(1-g) = 1$, which is also the coefficient of 
$x^2$ in the product $(1+x)^{2-g}(1+x)^g$.

{\bf Step 4}: Now $\alpha_*(Y) \subset P(f)$ and $\widehat \iota_{P(f)}$ restricted to $P(f)$ is multiplication by 2, so
$$
\widetilde \iota_{P(f)*} \widetilde \alpha_* [Y] = 2_{P(f)*}\widetilde \alpha_*[Y] = 4 \widetilde \alpha_*[Y] = \frac{8}{(g-1)!} \wedge^{g-1} [\Xi_f] 
\quad \mbox{in} \quad H^{2g-2}(P(f),\ZZ).
$$  
Since $H^{2g-2}(P(f),\ZZ)$ is torsion free, we obtain equation \eqref{e6.1}. This completes the proof of Theorem \ref{thm5.1}.
\end{proof}
\bigskip

\begin{prop}  \label{p5.2} 
Let  $\cR_{g+1,2}^{tet}$ be the locus of double coverings $[Y \ra X] \in \cR^{tet}_{g+1}$ such that the tetragonal covers $k: X \ra \PP^1$ have 
exactly two fibres consisting of two simple ramification points and otherwise  only fibres containing at least one smooth point of $X$. 
 Then the image of the canonical map of Theorem \ref{thm5.1} is contained in $\cR_{g+1,2}^{tet}$.  
\end{prop}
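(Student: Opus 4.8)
The plan is to reduce the statement to an analysis of the fibres of the tetragonal map $k\colon X \to \PP^1$ and to read the fibre types off from the ramification data already computed for the $8{:}1$ map $\widetilde k$. First I would recall that, by Propositions \ref{conn}, \ref{p5.1}(1) and \ref{genus}, the curve $Y$ is smooth and irreducible of genus $2g+1$; since $\iota$ is fixed-point free (Lemma \ref{fixedfree}), the quotient $X = Y/\iota$ is smooth of genus $g+1$ and $\pi\colon Y \to X$ is an \'etale double cover, so $[Y \to X]$ already lies in $\cR^{tet}_{g+1}$ (this is part of Theorem \ref{thm5.1}). What is left to prove is that exactly two fibres of $k$ consist of two simple ramification points and that every other fibre of $k$ contains a smooth point of $X$.

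The main tool is a \emph{doubling principle}: since $\pi$ is \'etale and $\iota$ has no fixed point, each $x \in X$ has exactly two preimages $y,\iota(y)$ in $Y$, each with the same ramification index over $\PP^1$ as $x$; hence $\widetilde k := k\circ\pi$ and $k$ have the same branch locus, and the partition of $8$ describing $\widetilde k^{-1}(a)$ is obtained from the partition of $4$ describing $k^{-1}(a)$ by doubling each part. Next I would observe that the branch locus of $k$ is contained in the union of the branch locus of $h$ with $\{h(p_1),h(p_2)\}$: if $a \in \PP^1$ is neither a branch point of $h$ nor one of $h(p_1),h(p_2)$, then $h^{-1}(a)$ consists of three distinct points, none equal to $p_1$ or $p_2$, so $f$ is \'etale over all of $h^{-1}(a)$ and $\widetilde k$, hence $k$, is \'etale over $a$ with fibre four distinct smooth points.

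It then remains to treat the points $a$ lying in the branch locus of $h$ or equal to some $h(p_\nu)$, using the ramification types of $\widetilde k$ established in the proof of Proposition \ref{genus}. If $a$ is a branch point of $h$ of ramification index $1$, then $\widetilde k^{-1}(a)$ has type $(2,2,1,1,1,1)$, so $k^{-1}(a)$ has type $(2,1,1)$: one simple ramification point and two smooth points. If $a$ is a branch point of $h$ of ramification index $2$, then $\widetilde k^{-1}(a)$ has type $(3,3,1,1)$, so $k^{-1}(a)$ has type $(3,1)$: a point of ramification index $2$ together with a smooth point. If $a = h(p_\nu)$ for $\nu \in \{1,2\}$, then $\widetilde k^{-1}(a)$ has type $(2,2,2,2)$, so $k^{-1}(a)$ has type $(2,2)$: two simple ramification points and no smooth point; and since the cover is general, $h(p_1)\neq h(p_2)$, so these are two distinct fibres. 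Putting the cases together, the fibres of $k$ consisting of two simple ramification points are exactly those over $h(p_1)$ and $h(p_2)$, there are exactly two of them, and every other fibre of $k$ contains a smooth point of $X$; hence $[Y \to X] \in \cR_{g+1,2}^{tet}$.

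I expect the only delicate step to be the last case: one must be sure that $\widetilde k^{-1}(h(p_\nu))$ degenerates exactly to the partition $(2,2,2,2)$ and no further --- equivalently that the four points of $f^{(3)}$ lying over the fibre of $h$ through $p_\nu$ stay distinct and are simple ramification points of $f^{(3)}$ --- and that the fixed-point-free involution $\iota$ permutes these four points in two orbits of size two, so that $k^{-1}(h(p_\nu))$ is of type $(2,2)$ and not $(4)$ or $(2,1,1)$. The first statement is precisely the smoothness of $Y$ at these points (Proposition \ref{p5.1}(1)) combined with the Riemann--Hurwitz count of Proposition \ref{genus}, and the second follows at once from Lemma \ref{fixedfree}; the remainder of the argument is only bookkeeping with partitions.
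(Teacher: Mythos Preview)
Your proof is correct and follows exactly the approach of the paper: both arguments derive the fibre types of $k$ from the ramification types of $\widetilde k$ computed in the proof of Proposition~\ref{genus}, concluding that the only $(2,2)$-fibres of $k$ lie over $h(p_1)$ and $h(p_2)$. You have simply made explicit the passage from $\widetilde k$ to $k$ via the \'etale cover $\pi$ (your ``doubling principle''), which the paper leaves implicit; note in particular that this principle already forces the $(2,2,2,2)$-fibre of $\widetilde k$ to descend to a $(2,2)$-fibre of $k$, so the extra worry in your final paragraph is not really needed.
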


\begin{proof}
According to the proof of Proposition \ref{genus} the fibres over $h(p_\nu)$, the images of the 2 branch points of $f$, are the 
only ones of type $(2,2)$ in the corresponding tetragonal cover
$k:X \ra \PP^1$.
\end{proof}

\begin{rem} \label{r5.3}
According to Theorem \ref{thm5.1} and Proposition \ref{p5.2} the trigonal construction in the general ramified case is given by the map
$$
\cR b^{tr}_{g,gen} \ra \cR_{g+1,2}^{tet}.
$$
Both moduli spaces are of dimension $2g+3$. 
It would be interesting to compute the exact image as well as the degree of this map. We hope to come back to this question. 
\end{rem}

\begin{rem}
During the preparation of this paper we came across the article \cite{da}, where it is claimed that Prym variety of a double 
covering of a 
trigonal curve ramified at  two points is isomorphic as ppav to the Jacobian of certain tetragonal  curve $X$. The author constructs
a tower of curves,  which links the curves in the double covering and $X$. 
It seems to us that in \cite{da} all curves are supposed to be smooth, even in the case
of a special cover. So we do not know whether this construction agrees 
with our construction, meaning that it gives the same map of the corresponding moduli spaces.  Certainly they 
give isomorphic Prym varieties, but taking into account Donagi's tetragonal construction it is not clear 
that the coverings are the same.  
\end{rem}

\end{document}